\documentclass[reqno,12pt]{amsart}\usepackage[sort&compress,numbers]{natbib}
\usepackage{amsfonts}
\usepackage{mathdots,amssymb,graphicx,amssymb,amsmath,amsopn, mathtools,mathabx}
\usepackage[utf8]{inputenc}
\usepackage{enumitem}
\usepackage{tikz-cd}
\usepackage{scalerel}
\usepackage{pgfplots}
\usepackage{amscd}
\usepackage{latexsym}
\usepackage{xcolor}
\usepackage{amscd}
\usepackage{pb-diagram}
\usepackage{mathrsfs}
\usepackage{accents}
\usepackage[textwidth=15cm,textheight=22.5cm,headheight=0.6cm,headsep=1cm,centering]{geometry}
\usepackage{fancyhdr}

\usepackage{float}
\usepackage{array}
\usepackage{graphicx}
\usepackage{subcaption}
\definecolor{brightmaroon}{rgb}{0.76, 0.13, 0.28}
\definecolor{airforceblue}{rgb}{0, 0.25, 0.77}
\definecolor{myOrange}{rgb}{1,0.5,0}
\definecolor{brightmaroon}{rgb}{0.76, 0.13, 0.28}
\definecolor{airforceblue}{rgb}{0, 0.4, 0.66}

\allowdisplaybreaks
\pgfplotsset{compat=1.18}

\theoremstyle{plain}

\newtheorem{teo}{Theorem}[section]

\newtheorem{pro}[teo]{Proposition}
\newtheorem{coro}[teo]{Corollary}
\newtheorem{defi}[teo]{Definition}
\newtheorem{remark}[teo]{Remark}

\numberwithin{equation}{section}
\newcommand{\An}{\mathcal{A}}
\newcommand{\Bn}{\mathcal{B}}

\newcommand{\un}{\mathbf{u}}

\newcommand{\Su}{\mathbf{S}}

\newcommand{\Pn}{\mathbf{P}}

\numberwithin{equation}{section}
\newcommand{\prodint}[1]{\left\langle{#1}\right\rangle}

\usepackage{color}

\newcommand*\pFq[2]{{}_{#1}F_{#2}}
\begin{document}
\title[A class of Truncated Freud polynomials]{A class of Truncated Freud polynomials}
	
\author[J. C García-Ardila, F.  Marcellán, M. E. Marriaga]{Juan C. García-Ardila,  Francisco Marcellán, Misael E. Marriaga}
\address[J. C. Garc\'ia-Ardila]{Departamento de Matem\'atica Aplicada a la Ingenier\'ia Industrial \\Universidad Polit\'ecnica de Madrid\\ Calle Jos\'e Gutierrez Abascal 2, 28006 Madrid, Spain.}\email{juancarlos.garciaa@upm.es}
\address[F. Marcellán]{Departamento de Matemáticas, Universidad Carlos III de Madrid, Leganés, Spain} \email{pacomarc@ing.uc3m.es}

\address[M. E. Marriaga]{Departamento de Matem\'atica Aplicada, Ciencia e Ingeniería de Materiales y Tecnología Electrónica, Universidad Rey Juan Carlos, Mo\'stoles Spain} \email{misael.marriaga@urjc.es}

\begin{abstract}
Consider the following truncated Freud  linear functional $\un_z$ depending on a parameter $z$,
$$\prodint{\un_z,p}=\int_0^\infty p(x)e^{-zx^4}dx,\quad z>0.$$
The aim of this work is to analyze the properties of the sequence of orthogonal polynomials $(P_n)_{n\geq 0}$ with respect to $\un_z$.  Such a linear functional is semiclassical and, as a consequence,  we get the system of nonlinear difference equations (Laguerre-Freud equations) that the coefficients of the three-term recurrence satisfy. The asymptotic behavior of such coefficients is given. On the other hand, the raising and lowering operators associated with such a linear functional are obtained, and thus a second-order linear differential equation of holonomic type that $(P_n)_{n\geq 0}$ satisfies is deduced. From this fact, an electrostatic interpretation of their zeros is given. Finally, some illustrative numerical tests concerning the behavior of the least and greatest zeros of these polynomials are presented.
\end{abstract}
\maketitle	

\textbf{2020 Mathematics Subject Classification}. Primary 42C05; 33C50.\\

\textbf{Keywords} Semiclassical linear functionals; Laguerre-Freud equations; Ladder operators; Holonomic equations; Electrostatic properties of zeros.\\
\textbf{Corresponding author:} Misael E. Marriaga, misael.marriaga@urjc.es

 \section{Introduction}
Consider the sequence of orthogonal polynomials associated with a linear functional $\un$ defined from the weight function $w(x)= e^{-x^2}$ supported on the positive real semi-axis by 
$$
\langle \un, p\rangle=\int_{0}^{\infty} p(x) e^{-x^2}dx.
$$
This linear functional belongs to a wide class of linear functionals which is known in the literature as semiclassical \cite{Ma91}. Indeed, they are semiclassical of class $s=1$, see \cite{Angel}. The concept of class allows us to introduce a hierarchy of linear functionals that constitutes an alternative way to the Askey tableau based on the hypergeometric character of the corresponding sequence of orthogonal polynomials. Semiclassical orthogonal polynomials appear in the seminal paper \cite{shohat}, where weight functions whose logarithmic derivatives are rational functions were considered. A second-order linear differential equation satisfied by the corresponding sequence of orthogonal polynomials is obtained therein. The theory of semiclassical orthogonal polynomials has been developed by P. Maroni (see \cite{Ma87,Ma91}) who combined techniques of functional analysis, distribution theory, Fourier and $z$-transforms, ordinary linear differential equations, among others, in order to deduce algebraic and structural properties of such polynomials. Many of the most popular families of orthogonal polynomials coming from \textit{Mathematical Physics} as Hermite, Laguerre, Jacobi, and Bessel are semiclassical of class $s=0$, but other families appearing in the literature are also semiclassical. For instance, semiclassical  families of class $s=1$ are described in \cite{Belmehdi}. As an example, in \cite{DM22} truncated Hermite polynomials, associated with the normal distribution supported on a symmetric interval of the real line, have been considered. They are semiclassical of class $s=2.$\\

The coefficients of the three-term relation satisfied by orthogonal polynomials associated with a semiclassical linear functional satisfy coupled non linear difference equations (Laguerre-Freud equations, see \cite{Said}) which are related to discrete Painlev\'e equations, see \cite{Magnus1, Magnus2, Walter1}, among others. On the other hand, ladder operators (lowering and raising) are associated with semiclassical orthogonal polynomials. Their construction is based on the so-called first structure relation (\cite{Ma91}) they satisfy. As a consequence, one can deduce a second-order linear differential equation that the polynomials satisfy. Notice that the coefficients are polynomials with degrees depending on the class.\\

In the framework of random matrices, several authors have considered Gaussian unitary ensembles with one or two discontinuities (see \cite{Lyu1}). Therein it is proved that the logarithmic derivative of the Hankel determinants generated by the normal (Gaussian) weight with two jump discontinuities in the scaled variables tends to the Hamiltonian of a coupled Painlev\'e II system and it satisfies a second-order PDE. The asymptotics of the coefficients of the three-term recurrence relation for the corresponding orthogonal polynomials are deduced. They are related to the solutions of the coupled Painlev\'e  II system. The techniques are based on the analysis of ladder operators associated with such orthogonal polynomials. For more information, see \cite{Chen97,Chen05}.\\

Integrable systems also provide nice examples of semiclassical orthogonal polynomials. They are related to time perturbations of a weight function (Toda, Langmuir lattices). For more information, see \cite{Walter2}.\\

In the literature, see \cite{Shizgal1981}, among others, Gaussian quadrature rules are analyzed for weight functions $w(x)= x^{p} e^{-x^2}, p>0, $ supported on the positive real semi-axis. The corresponding orthogonal polynomials are called Maxwell polynomials. These polynomials constitute a useful tool for the solution of the Boltzman and/or Fokker-Planck equations. It is important to point out that the coefficients of the three-term recurrence relations  satisfy non-linear equations, which are numerically unstable. In \cite{Shizgal1981}, extended precision arithmetic is used to generate the recurrence coefficients to high order. As a consequence, the polynomials and associated quadrature weights and nodes are deduced.\\

Maxwell polynomials are used in various fields for different purposes, such as in pseudo-spectral collocation schemes. Among the numerous characteristics that make these polynomials desirable for pseudo-spectral discretization schemes for the velocity variable, one can emphasize their capability to handle semi-infinite intervals, the convenient distribution of their zeros that balances clustering around zero, where increased resolution is often needed, with the sampling at increasingly larger distances from the origin, and their optimal location for the computation of integrals involving a Maxwell-Boltzmann distribution. For further details, see \cite{landreman,SANCHEZVIZUET2018,Shizgal2015}.\\

The aim of our contribution is to analyze the sequence of monic orthogonal polynomials associated with a linear functional $\un_z$ defined from a weight function $w(x)= e^{-zx^4}$ supported on the positive real semi-axis by 
$$
\langle \un_z, p\rangle=\int_{0}^{\infty} p(x) e^{-zx^4}dx, z>0.
$$
Such a kind of weight function appears in the framework of high electric field strength, where the steady state electron velocity distribution of electrons dilutely dispersed in a background gas of atoms is given by the so-called  Druyvesteyn distribution function, assuming that the electron-atom momentum transfer cross section is constant~\cite{Huxley}. This distribution (weight) function has the form $w(x)=x^2 \exp (-ax^4),$ where $x$ denotes the reduced speed of electrons. In \cite{Clarke} the authors analyze the coefficients of the three-term recurrence that the sequence of orthogonal polynomials associated with the weight $w(x)=x^p \exp (-ax^4)$, $p=0, 1, 2, \ldots,$ supported in the positive real semi-axis satisfies. In particular, a first approach to their asymptotic expansions is given, as well as some open problems related to Gaussian quadrature formulas, when $p=2$ are stated.\\

The structure of the manuscript is as follows. In Section \ref{background}, a basic background on semiclassical linear functionals is provided. Section \ref{section3} is focused on the study of truncated Freud linear functionals, that is, the case $\langle \un_z, p\rangle=\int_{0}^{\infty} p(x) e^{-zx^4}dx,$ $z\in(0,\infty)$, a semiclassical linear functional of class $3$. Furthermore, in Section \ref{Section4}, a system of nonlinear difference equations that the corresponding sequence of monic orthogonal polynomials satisfies is deduced. As a consequence, the asymptotic behavior of such coefficients in terms of $n$ is deduced. In Section \ref{Sec, Holo}, we study the ladder operators associated with such a sequence of monic orthogonal polynomials. Thus, we get a second-order linear differential equation with polynomial coefficients for such orthogonal polynomials. Section \ref{section6} focuses on the study of the parameters in the three-term recurrence relation in terms of $z.$ Finally, in Section \ref{elect_Section} we provide an electrostatic interpretation of the distribution of their zeros, as well as some numerical tests about the behavior of the least and greatest zeros of these polynomials.

\section{Basic background}\label{background}
Let $\mathbb{P}$ denote the linear space of polynomials with complex coefficients. Given a linear functional $\mathbf{u}$ that acts on $\mathbb{P}$, the $n$th moment of $\un$ is defined as $\mu
_n:=\prodint{\un,x^n}$, $n\geq0$. For $q\in\mathbb{P}$ we define the linear functional $q(x)\un$ as
\begin{equation*}
\prodint{q\un,p}=\prodint{\un,q p}, \quad p\in\mathbb{P}.
\end{equation*}
The distributional derivative of $\un$ is the linear functional $D\un$ such that
		$$
            \prodint{D\un,p}=-\prodint{\un,p^\prime}, \quad p\in\mathbb{P}.
            $$
The linear functional $\un$ is said to be quasi-definite if  there exists a sequence of monic polynomials $(P_n)_{n\geq0}$ such that $\deg{P_n(x)}=n$ and $\prodint{\un,P_nP_m}=K_{n}\delta_{n,m},$ where $\delta_{n,m}$ is the Kronecker symbol and $K_{n}\ne 0$ for $n\geqslant 0$ (see \cite{Chi,GMM21}). If $\un$ is quasi-definite, then the sequence $(P_n)_{n\geq0}$ is said to be the \textit{sequence of monic orthogonal polynomials} (SMOP) associated with $\un$.  If $K_{n}>0,$ then the linear functional $\un$ is said to be positive definite. \\
	
Suppose that there exist two sequences of complex numbers $( a_n)_{n\geq1}$ and $(b_n)_{n\geq0}$, with $ a_n\ne 0$,  and let $(P_n)_{n\geq0}$ be a sequence of monic polynomials generated by a three-term recurrence relation (TTRR) 	
\begin{equation}\label{ttrrr}
\begin{aligned}
			x\,P_{n}(x)&=P_{n+1}(x)+b_n\,P_{n}(x)+ a_{n}\,P_{n-1}(x),\quad n\geq0,\\
			P_{-1}(x)&=0, \ \ \ \ \ P_{0}(x)=1.
		\end{aligned}
	\end{equation}
	Then,  by Favard's Theorem, the above is equivalent to the existence of a unique  quasi-definite  linear functional $\un$ such that $(P_n)_{n\geq0}$ is its corresponding SMOP, see \cite{Chi}.\\

Sometimes, it is more convenient to work with the matrix expression of the three-term recurrence relation \eqref{ttrrr}, that is, if $\Pn=(P_0,P_1,\cdots)^T$, then the recurrence relation can be written in matrix form
\begin{equation}\label{jacobi}
x\Pn=\mathbf{J}\Pn \quad \text{with}\quad \mathbf{J}= \begin{pmatrix}
		b_0&1&0\\
		a_1&b_1&1&0	\\
		0&a_2&b_2&1&0\\
		&0&a_3&b_3&1&0\\
		&&\ddots&\ddots&\ddots&\ddots&\ddots\\
	\end{pmatrix}.
\end{equation}
The matrix $\mathbf{J}$ is known in the literature as \textit{monic Jacobi matrix}.\\

If $(P_n)_{n\geq 0}$ is a sequence of monic orthogonal polynomials with respect to a linear functional $\un$, then using the \textit{Christoffel-Darboux formula} (see \cite{GMM21}) we can get the  following confluent formula
\begin{equation*}
    \sum_{k=0}^n\frac{P^2_{k}(x)}{h_k}=\frac{P^{\prime}_{n+1}(x)P_{n}(x)-P^\prime_{n}(x)P_{n+1}(x)}{h_n},
\end{equation*}
where $h_k=\prodint{\un,P_k^2}.$
	\begin{defi}
		Given a quasi-definite linear functional $\un$ with moments $(\mu_n)_{n\geq0}$, the formal series
		\begin{equation*}
			\Su_{\un}(t):=\sum_{n=0}^\infty \dfrac{\mu_n}{t^{n+1}}
		\end{equation*}
		is called the \textit{Stieltjes function} associated with $\un$.
	\end{defi}

	
Next, we present some important results concerning semiclassical linear functionals.
	\begin{defi}[\cite{Ma87}]
		A quasi-definite functional $\mathbf{u}$ is said to be semiclassical if there exist non-zero polynomials $\phi(x)$ and $\psi(x)$ with $\deg\phi(x)=:r\ge 0$ and $\deg\psi(x)=:t\ge 1$, such that $\mathbf{u}$ satisfies the distributional Pearson equation
		\begin{equation}\label{pearson-semic}
			D(\phi\,\mathbf{u})+\psi\,\mathbf{u}=\mathbf{0}.
            \end{equation}
		An SMOP associated with a semiclassical linear functional $\mathbf{u}$ is called  semiclassical.
	\end{defi}
	Because a semiclassical linear functional satisfies many Pearson equations, we give the following definition.
	\begin{defi}
		The class of a semiclassical functional $\mathbf{u}$ is defined as
		\begin{equation*}
			\mathfrak{s}(\mathbf{u}):= \min \max\{\deg \phi(x)-2, \deg\psi(x)-1 \},
		\end{equation*}
		where the minimum is taken over all pairs of polynomials $\phi(x)$ and $\psi(x)$ so that \eqref{pearson-semic} holds.
	\end{defi}
	\begin{pro}[\cite{GMM21, Ma91}]\label{sim_cond}
		Let $\mathbf{u}$ be a semiclassical linear functional, and let $\phi(x)$ and $\psi(x)$ be non-zero polynomials with $\deg\phi(x)=:r$ and $\deg \psi(x)=:t$, such that \eqref{pearson-semic} is satisfied. Let $s := \max(r-2,t-1)$. Then $s = \mathfrak{s}(\mathbf{u})$  if and only if
		\begin{equation}\label{prodformula}\prod_{c:\,\phi(c)=0}\left(|\psi(c)+\phi^\prime(c)|+|\langle\mathbf{u},\theta_c\psi+\theta^2_c\phi\rangle|\right)>0.
		\end{equation}
		Here, $(\theta_c f ) (x)=\dfrac{f(x)-f(c)}{x-c}, x\neq c, (\theta_c f ) (c)= f ' (c).$
	\end{pro}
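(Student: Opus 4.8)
The plan is to reduce everything to a single \emph{reduction step}: I will show that $(\phi,\psi)$ can be replaced by a Pearson pair of strictly smaller class exactly when $\phi$ has a zero $c$ that can be ``divided out'' compatibly with \eqref{pearson-semic}, and that the product in \eqref{prodformula} is precisely the assertion that no such zero exists. The proof then splits into a reduction lemma (the substantive part) and a bookkeeping argument relating reducibility to the class.

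\smallskip

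\textbf{Step 1 (one-step reduction lemma).} Fix a zero $c$ of $\phi$ and write $\phi=(x-c)\phi_c$ with $\phi_c:=\theta_c\phi$, so $\deg\phi_c=r-1$ and $\phi_c(c)=\phi'(c)$. First I would apply $D$ to $\phi\,\mathbf{u}=(x-c)(\phi_c\mathbf{u})$, using the Leibniz rule $D(q\mathbf{v})=q\,D\mathbf{v}+q'\mathbf{v}$ (immediate from the definitions of $q\mathbf{u}$ and $D\mathbf{u}$) together with \eqref{pearson-semic}, to get
\begin{equation*}
(x-c)\,D(\phi_c\mathbf{u})=-(\psi+\phi_c)\,\mathbf{u}.
\end{equation*}
From this the claim to establish is: the pair $\bigl(\phi_c,\ \theta_c\psi+\theta_c^2\phi\bigr)$ satisfies \eqref{pearson-semic} for $\mathbf{u}$ if and only if $\psi(c)+\phi'(c)=0$ \emph{and} $\prodint{\mathbf{u},\theta_c\psi+\theta_c^2\phi}=0$, while otherwise no pair of the form $(\phi_c,\cdot)$ does. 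For the forward implication, if $D(\phi_c\mathbf{u})+\widetilde\psi\,\mathbf{u}=\mathbf{0}$ for a polynomial $\widetilde\psi$, I multiply by $(x-c)$ and subtract the displayed identity to obtain $\bigl[(x-c)\widetilde\psi-\psi-\phi_c\bigr]\mathbf{u}=\mathbf{0}$; since $\mathbf{u}$ is quasi-definite (nonzero Hankel determinants), a polynomial killing $\mathbf{u}$ must vanish, so $(x-c)\widetilde\psi=\psi+\phi_c$. Evaluating at $c$ forces $\psi(c)+\phi'(c)=0$, then $\widetilde\psi=\theta_c(\psi+\phi_c)=\theta_c\psi+\theta_c^2\phi$, and pairing $D(\phi_c\mathbf{u})+\widetilde\psi\,\mathbf{u}=\mathbf{0}$ with the constant $1$ (using $\prodint{D(\phi_c\mathbf{u}),1}=0$) forces $\prodint{\mathbf{u},\theta_c\psi+\theta_c^2\phi}=0$. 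For the converse, assuming both conditions and setting $\psi_c:=\theta_c\psi+\theta_c^2\phi$, the first condition gives $(x-c)\psi_c=\psi+\phi_c$, so the displayed identity becomes $(x-c)\bigl(D(\phi_c\mathbf{u})+\psi_c\mathbf{u}\bigr)=\mathbf{0}$, hence $D(\phi_c\mathbf{u})+\psi_c\mathbf{u}=\lambda\,\delta_c$ with $\delta_c\colon p\mapsto p(c)$, and pairing with $1$ gives $\lambda=\prodint{\mathbf{u},\theta_c\psi+\theta_c^2\phi}=0$ by the second condition. Finally, whenever such a reduction exists, $\deg\phi_c=r-1$ and $\deg\psi_c\le\max(t-1,r-2)=s$, so the new pair has class $\max(r-3,\deg\psi_c-1)\le s-1<s$; in particular $\mathfrak{s}(\mathbf{u})<s$.

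\smallskip

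\textbf{Step 2 (reducibility versus class).} One direction of the proposition is then immediate: if the product in \eqref{prodformula} vanishes, some zero $c$ of $\phi$ satisfies both conditions, so Step~1 produces a Pearson pair of class $<s$ and $\mathfrak{s}(\mathbf{u})\neq s$. For the converse, I assume $\mathfrak{s}(\mathbf{u})<s$ and pick $(\widehat\phi,\widehat\psi)$ realizing the class, so $\deg\widehat\phi\le\mathfrak{s}(\mathbf{u})+2$ and $\deg\widehat\psi\le\mathfrak{s}(\mathbf{u})+1$. Differentiating $\phi\widehat\phi\,\mathbf{u}$ as $D\bigl(\widehat\phi(\phi\mathbf{u})\bigr)$ and as $D\bigl(\phi(\widehat\phi\mathbf{u})\bigr)$, equating, and cancelling $\mathbf{u}$ (quasi-definiteness) yields the polynomial identity
\begin{equation*}
\phi\,\bigl(\widehat\phi'+\widehat\psi\bigr)=\widehat\phi\,\bigl(\phi'+\psi\bigr),
\end{equation*}
whence $\phi\mid\widehat\phi(\phi'+\psi)$, i.e.\ $\phi/g\mid\widehat\phi$ with $g:=\gcd(\phi,\phi'+\psi)$. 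If $g$ were constant this would give $\phi\mid\widehat\phi$, so $(\widehat\phi,\widehat\psi)=(\phi\ell,\ell\psi-\phi\ell')$ for a polynomial $\ell$, a pair one checks has class $\ge s$, contrary to hypothesis; hence $\deg g\ge1$ and $\phi$, $\phi'+\psi$ have a common zero $c$, i.e.\ $\psi(c)+\phi'(c)=0=\phi(c)$. Reducing by the whole factor $g$ (legitimate since $g\mid\phi$ and $g\mid\psi+g'(\phi/g)$) and comparing with $\widehat\phi=(\phi/g)\,k$ through a gcd/Bézout computation shows $D\bigl((\phi/g)\mathbf{u}\bigr)$ to be a polynomial multiple of $\mathbf{u}$; peeling off one factor $(x-c)$ of $g$ then exhibits a genuine Pearson pair $(\theta_c\phi,\cdot)$, so by Step~1 \emph{both} $\psi(c)+\phi'(c)=0$ and $\prodint{\mathbf{u},\theta_c\psi+\theta_c^2\phi}=0$, and the product in \eqref{prodformula} vanishes. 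A few degenerate configurations ($\deg\phi\le1$; $\phi'+\psi\equiv0$; $\deg\psi_c=0$ after a reduction) would be dispatched directly.

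\smallskip

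The main obstacle is Step~1, and within it the observation that after clearing the factor $(x-c)$ the reduced Pearson equation holds only modulo a Dirac mass $\lambda\delta_c$, so that one must compute $\lambda$ and recognize it as $\prodint{\mathbf{u},\theta_c\psi+\theta_c^2\phi}$ — this is exactly what brings the second summand of \eqref{prodformula} into the criterion. The remaining delicate point, in the converse half of Step~2, is showing that a lower-class pair necessarily forces a \emph{valid} one-step reduction of $(\phi,\psi)$; that is where the comparison identity, together with gcd/Bézout bookkeeping that tracks the multiplicities of the common zeros of $\phi$ and $\phi'+\psi$, does the essential work.
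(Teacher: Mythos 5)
The paper does not prove this proposition (it is quoted from \cite{GMM21, Ma91}), so I am comparing your argument with the standard one. Your Step 1 is correct and is exactly the classical one-step reduction lemma: the Leibniz rule gives $(x-c)D(\theta_c\phi\,\mathbf{u})=-(\psi+\theta_c\phi)\mathbf{u}$, quasi-definiteness lets you cancel polynomials against $\mathbf{u}$, and the residual Dirac mass at $c$ is what produces the moment condition $\prodint{\mathbf{u},\theta_c\psi+\theta_c^2\phi}=0$. This also settles the easy implication (product $=0$ $\Rightarrow$ $\mathfrak{s}(\mathbf{u})<s$), modulo the degenerate cases you flag, which are indeed harmless.

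The genuine gap is in the converse half of Step 2. From $\phi(\widehat\phi'+\widehat\psi)=\widehat\phi(\phi'+\psi)$ you obtain $\phi/g\mid\widehat\phi$ with $g=\gcd(\phi,\phi'+\psi)$, which is divisibility in the wrong direction, and your next assertion --- that reducing by the whole factor $g$ is ``legitimate since $g\mid\phi$ and $g\mid\psi+g'(\phi/g)$'', so that $D\bigl((\phi/g)\mathbf{u}\bigr)$ is a polynomial multiple of $\mathbf{u}$ --- is false: algebraic divisibility only lets you clear a linear factor up to a Dirac mass, and that mass need not vanish; this is precisely the obstruction your own Step 1 identified and the reason the second summand appears in \eqref{prodformula}. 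A concrete counterexample is furnished by this very paper: take $\mathbf{u}=\un_z$ and the admissible non-minimal pair $\phi(x)=x(x-1)$, $\psi(x)=(x-1)(4zx^4-1)-x$ (so $s=4>3=\mathfrak{s}(\un_z)$). Then $\phi'+\psi=4zx^4(x-1)$, hence $g=x(x-1)$ and $\phi/g=1$; your claim would force a Pearson equation $D\un_z+\Psi\un_z=\mathbf{0}$, whose multiplier would have to be $\Psi=4zx^3$, impossible because $\prodint{\un_z,4zx^3}=1\neq0$. The correct completion uses the opposite divisibility: your Bézout computation shows $D\bigl((a\phi_1+b\phi_2)\mathbf{u}\bigr)$ is again a polynomial multiple of $\mathbf{u}$, so the set of admissible multipliers is closed under gcd and the minimal monic $\widehat\phi$ divides every admissible $\phi$; writing $\phi=\rho\,\widehat\phi$, the comparison identity gives $\psi=\rho\widehat\psi-\rho'\widehat\phi$, and $\deg\rho\geq1$ whenever $s>\mathfrak{s}(\mathbf{u})$. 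Choosing $c$ a zero of $\rho$ (not an arbitrary zero of $g$), the polynomial $\theta_c\phi=(\rho/(x-c))\widehat\phi$ is a multiple of $\widehat\phi$, hence admissible, and only then does your Step 1 (forward direction) yield both $\psi(c)+\phi'(c)=0$ and $\prodint{\mathbf{u},\theta_c\psi+\theta_c^2\phi}=0$, i.e.\ the vanishing of the product. As written, your final ``peeling off one factor $(x-c)$ of $g$'' begs the question, since exhibiting a genuine Pearson pair with first entry $\theta_c\phi$ is exactly what requires the moment condition you are trying to prove.
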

	
According to the class, there is a hierarchy of semiclassical linear functionals. The class $s=0$ is constituted by the classical linear functionals (Hermite, Laguerre, Jacobi and Bessel). The class $s=1$ has been studied in \cite{Belmehdi}.

\section{Truncated Freud polynomials}\label{section3}
In this section we study the Freud linear functional $\un_z$ depending on a parameter~$z$
\begin{equation}\label{lfpd}
\prodint{\un_z,p}=\int_0^\infty p(x)e^{-zx^4}dx,\quad z>0,
\end{equation}

as well as its corresponding sequence of monic orthogonal polynomials $(P_n)_{n\geq 0}.$ We will prove that $\un_z$ is semiclassical   and leverage this property to derive characteristics of both the moments of $\un_z$  and the parameters of the TTRR satisfied by $(P_n)_{n\geq 0}$.\\

Let $(\mu_n(z))_{n\geq 0}$ be the sequence of moments of $\un_z$ and let $n=4m+k$ with $m=0,1\ldots,$ and $k=0,1,2,3$.  By setting $t=zx^4$ we
can write
\begin{equation*}
	\begin{aligned}
\mu_{4m+k}(z)=\prodint{\un_z,x^{4m+k}}&=\int_0^\infty x^{4m+k}e^{-zx^4}dx
		=\dfrac{z^{-m-(k+1)/4}}{4}\,\int_0^\infty t^{m+\frac{k-3}{4}} e^{-t}dt.	
	\end{aligned}
\end{equation*}
Observe that the last integral is $\Gamma(m+\frac{k+1}{4})$, where $\Gamma(x)$ is the Gamma function.
With this in mind, we have that for $m\geqslant 0$ and $k=0,1,2,3,$
\begin{equation*}
	\mu_{4m+k}(z)=\dfrac{z^{-m-(k+1)/4}}{4}\,\Gamma\left(m+\dfrac{k+1}{4}\right),
    \end{equation*}
or,
\begin{equation}\label{moments}
	\mu_{n}(z)=\dfrac{z^{-(n+1)/4}}{4}\,\Gamma\left(\dfrac{n+1}{4}\right),  \quad n\geqslant 0.
    \end{equation}
\begin{teo}
The functional $\un_z$ is semiclassical of class $3$. Moreover, $\un$ satisfies the Pearson equation \begin{equation}\label{pearson_eq}
	D(\phi\,\un_z)+\psi\,\un_z=\mathbf{0},
\end{equation}
with $\phi(x)=x$ and $\psi(x)=4zx^4-1.$
\end{teo}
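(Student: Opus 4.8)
The plan is to verify the distributional Pearson equation \eqref{pearson_eq} directly and then apply Proposition~\ref{sim_cond} to pin down the class. First I would check \eqref{pearson_eq} by testing both sides against an arbitrary monomial $x^n$. Using the definition of the distributional derivative and of multiplication by a polynomial, $\prodint{D(\phi\,\un_z)+\psi\,\un_z,x^n}=-\prodint{\un_z,x(x^n)'}+\prodint{\un_z,(4zx^4-1)x^n}=-n\mu_n(z)+4z\mu_{n+4}(z)-\mu_n(z)$. So the claim reduces to the recurrence $4z\,\mu_{n+4}(z)=(n+1)\,\mu_n(z)$ for all $n\geq 0$. This is immediate from the closed form \eqref{moments}: $4z\,\mu_{n+4}(z)=4z\cdot z^{-(n+5)/4}\Gamma\!\big(\tfrac{n+5}{4}\big)/4=z^{-(n+1)/4}\Gamma\!\big(\tfrac{n+1}{4}+1\big)=z^{-(n+1)/4}\tfrac{n+1}{4}\Gamma\!\big(\tfrac{n+1}{4}\big)=(n+1)\mu_n(z)$, using the functional equation $\Gamma(x+1)=x\Gamma(x)$. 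Alternatively, one can obtain the same recurrence by integration by parts in $\int_0^\infty x^{n+1}\cdot 4zx^3 e^{-zx^4}dx$, noting that the boundary terms at $0$ and $\infty$ vanish because $n+1\geq 1$; I would mention this as the conceptual origin of the polynomials $\phi$ and $\psi$.

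Next I would read off that $\deg\phi=1$ and $\deg\psi=4$, so for this particular Pearson pair $s=\max(\deg\phi-2,\deg\psi-1)=\max(-1,3)=3$. Hence $\mathfrak{s}(\un_z)\leq 3$. To show equality — that is, that this pair is minimal and the class is exactly $3$ — I would invoke the criterion \eqref{prodformula} of Proposition~\ref{sim_cond}. The only zero of $\phi(x)=x$ is $c=0$. We have $\psi(0)+\phi'(0)=-1+1=0$, so the first term in the product vanishes and everything rests on the second term $|\prodint{\un_z,\theta_0\psi+\theta_0^2\phi}|$. Here $\theta_0\psi(x)=(\psi(x)-\psi(0))/x=(4zx^4-1+1)/x=4zx^3$, and $\theta_0\phi(x)=(x-0)/x=1$, so $\theta_0^2\phi(x)=\theta_0(1)=0$. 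Therefore $\theta_0\psi+\theta_0^2\phi=4zx^3$ and $\prodint{\un_z,4zx^3}=4z\,\mu_3(z)>0$ since the weight $e^{-zx^4}$ is positive on $(0,\infty)$ and $z>0$ (concretely $\mu_3(z)=z^{-1}\Gamma(1)/4=1/(4z)$, giving the value $1$). Thus the product in \eqref{prodformula} is strictly positive, and by Proposition~\ref{sim_cond} the class is exactly $3$.

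There is no serious obstacle here; the argument is essentially a bookkeeping verification. The only point requiring a little care is confirming that $\un_z$ is genuinely quasi-definite (indeed positive definite), so that the notions of SMOP and class apply — this follows because $e^{-zx^4}$ is a positive weight with all moments finite on $(0,\infty)$, hence the associated Hankel determinants are positive. A second small subtlety is the evaluation of $\theta_0^2\phi$: one must apply $\theta_0$ twice and observe that $\theta_0\phi$ is the constant $1$, whose image under $\theta_0$ is $0$; overlooking this would not change the conclusion since the $4z\mu_3(z)$ term already dominates, but it is worth getting right. Finally, I would remark that the choice $\phi(x)=x$ is forced up to a constant: since the weight vanishes at the endpoint behavior only through the exponential and the support has a hard edge at $0$, any valid $\phi$ must vanish at $0$, which is exactly what makes the class jump to $3$ rather than staying at the value $2$ one might naively expect from $\deg\psi$ alone when $\psi(0)+\phi'(0)\neq 0$.
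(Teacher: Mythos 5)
Your proposal is correct and follows essentially the same route as the paper: the only cosmetic difference is that you verify the Pearson equation by checking the moment recurrence $4z\,\mu_{n+4}(z)=(n+1)\,\mu_n(z)$ on monomials via $\Gamma(x+1)=x\Gamma(x)$, whereas the paper integrates by parts directly against an arbitrary polynomial $p$ (which you also note as the conceptual origin). Your class computation via Proposition~\ref{sim_cond} --- $\psi(0)+\phi'(0)=0$, $\theta_0\psi+\theta_0^2\phi=4zx^3$, and $4z\,\mu_3(z)=1>0$ --- is exactly the paper's argument.
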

\begin{proof} Using integration by parts, for all $p\in \mathbb{P}$,
\begin{align*}
\prodint{D(\phi\,\un_z),p}&=-\prodint{\un_z,\phi\, p^\prime}=-\int_{0}^\infty x\, p^\prime(x) e^{-zx^4}dx\\
	&=\left.-p(x)xe^{-zx^4}\right|_{0}^{\infty}+\int_{0}^\infty p(x)\left(1-4zx^4\right) e^{-zx^4}dx \\
	&=-\prodint{\psi\,\un_z,p\,}.
\end{align*}
Next, we determine the class. In this case, \eqref{prodformula} is read as $4z|\langle \un_z, x^3\rangle |$. Taking into account \eqref{moments}, we get
$$
4z|\langle \un_z, x^3\rangle |=1 > 0.
$$
Then, by Proposition \ref{sim_cond}, the class of $\un_z$ is $\mathfrak{s}(\mathbf{u}_z)=3$.
\end{proof}
Applying \eqref{pearson_eq}  to the monomial $p(x)=x^n$ we get the following recurrence relation for the moments.
\begin{coro}
The moments $(\mu_n(z))_{n\geq0}$ with $\mu_n(z):=\langle \un_z,x^n\rangle$ satisfy the following fourth-order recurrence equation,
\begin{equation}\label{recmoments}
4z\mu_{n+4}(z)-(n+1)\mu_n(z)=0,\qquad n\geq 0,
\end{equation}
with $\mu_0(z),\mu_1(z),\mu_2(z),\mu_3(z)$ given in \eqref{moments}.
\end{coro}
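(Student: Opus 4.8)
The statement is an immediate consequence of the Pearson equation \eqref{pearson_eq}, so the plan is simply to test that identity against monomials. Writing $\prodint{\cdot,\cdot}$ for the action of $\un_z$ and using the definitions of $D\un_z$ and of $q\,\un_z$ recalled in Section \ref{background}, I would evaluate $D(\phi\,\un_z)+\psi\,\un_z=\mathbf{0}$ at $p(x)=x^n$:
\begin{equation*}
0=\prodint{D(\phi\,\un_z)+\psi\,\un_z,\,x^n}=-\prodint{\un_z,\,\phi(x)\,(x^n)^\prime}+\prodint{\un_z,\,\psi(x)\,x^n}.
\end{equation*}
Substituting $\phi(x)=x$ and $\psi(x)=4zx^4-1$ turns the right-hand side into $-n\,\prodint{\un_z,x^{n}}+4z\,\prodint{\un_z,x^{n+4}}-\prodint{\un_z,x^{n}}$, that is, $4z\,\mu_{n+4}(z)-(n+1)\,\mu_n(z)=0$, which is precisely \eqref{recmoments}. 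The four seed values $\mu_0(z),\ldots,\mu_3(z)$ are those displayed in \eqref{moments}, and since \eqref{recmoments} determines $\mu_{n+4}(z)$ from $\mu_n(z)$, the recurrence together with these four values characterizes the whole moment sequence.

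There is no real obstacle here: the only point that requires any checking is the integration by parts hidden in the identity $D(\phi\,\un_z)=-\psi\,\un_z$, namely the vanishing of the boundary term $x\,p(x)\,e^{-zx^4}$ at $x=0$ and as $x\to\infty$, but this is exactly what was already verified in the proof of the preceding theorem, so nothing new is needed. As a consistency check one can also recover \eqref{recmoments} directly from the closed form \eqref{moments}: by the functional equation of the Gamma function, $\Gamma\left(\tfrac{n+1}{4}+1\right)=\tfrac{n+1}{4}\,\Gamma\left(\tfrac{n+1}{4}\right)$, whence $4z\,\mu_{n+4}(z)=z^{-(n+1)/4}\,\Gamma\left(\tfrac{n+1}{4}+1\right)=(n+1)\,\mu_n(z)$.
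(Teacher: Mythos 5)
Your proof is correct and follows exactly the paper's route: the corollary is obtained by applying the Pearson equation $D(\phi\,\un_z)+\psi\,\un_z=\mathbf{0}$ with $\phi(x)=x$, $\psi(x)=4zx^4-1$ to the monomial $p(x)=x^n$, which is precisely what the paper does. Your additional consistency check via the functional equation of the Gamma function is a harmless bonus, not a divergence in method.
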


The recurrence relation \eqref{recmoments} allows us to obtain a non-homogeneous first order linear ODE satisfied by the Stieltjes function associated with $\un_z$.

\begin{pro}
The Stieltjes function $\Su_{\un_z}(t;z)$ associated with the linear functional $\un_z$ satisfies a non-homogeneous first order linear ODE. Namely,
$$t\partial_t\Su_{\un_z}(t;z)+4zt^4\Su_{\un_z}(t;z)=4z\left(\mu_3(z)+t\mu_2(z)+t^2\mu_1(z)+\mu_0(z)t^3\right).$$
\end{pro}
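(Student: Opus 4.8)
The plan is to work directly with the defining formal series $\Su_{\un_z}(t;z)=\sum_{n\geq 0}\mu_n(z)\,t^{-n-1}$ and to reduce the claimed ODE to the moment recurrence \eqref{recmoments}, which has already been established. Since the Stieltjes function is a formal power series in $t^{-1}$, all the manipulations below (termwise differentiation, multiplication by $t^4$, shifting the summation index) are purely algebraic, so no convergence considerations are needed.

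First I would compute the two terms on the left-hand side. Differentiating termwise gives
$$\partial_t\Su_{\un_z}(t;z)=-\sum_{n\geq 0}(n+1)\mu_n(z)\,t^{-n-2},$$
so that $t\,\partial_t\Su_{\un_z}(t;z)=-\sum_{n\geq 0}(n+1)\mu_n(z)\,t^{-n-1}$. Multiplying the series by $4zt^4$ and setting $n=m+4$,
$$4zt^4\,\Su_{\un_z}(t;z)=4z\sum_{n\geq 0}\mu_n(z)\,t^{3-n}=4z\sum_{m\geq -4}\mu_{m+4}(z)\,t^{-m-1}.$$
Then I would split off the four terms corresponding to $m=-4,-3,-2,-1$ in the last sum; these contribute exactly $4z\bigl(\mu_0(z)t^3+\mu_1(z)t^2+\mu_2(z)t+\mu_3(z)\bigr)$, which is the prescribed right-hand side. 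Adding the remaining part ($m\geq 0$) to $t\,\partial_t\Su_{\un_z}(t;z)$ yields
$$\sum_{m\geq 0}\bigl(4z\mu_{m+4}(z)-(m+1)\mu_m(z)\bigr)t^{-m-1},$$
and this series vanishes identically, term by term, precisely because of the recurrence \eqref{recmoments}. Collecting the pieces gives the stated identity, with $\mu_0(z),\dots,\mu_3(z)$ appearing as the ``initial moments'' in \eqref{moments}.

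There is essentially no hard step: the only thing to be careful about is the bookkeeping of the index shift in $4zt^4\,\Su_{\un_z}$ and the extraction of the boundary monomials $t^3,t^2,t,1$, which are exactly the terms not constrained by the recurrence (\eqref{recmoments} controls $\mu_{n+4}$ only for $n\geq 0$). Alternatively, the same conclusion can be obtained by applying the Pearson equation \eqref{pearson_eq} to the Cauchy kernel $\sum_{n\geq 0}x^n t^{-n-1}=(t-x)^{-1}$, but the moment-recurrence route above is the most direct.
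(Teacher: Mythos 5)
Your proof is correct and follows essentially the same route as the paper: both arguments rest on termwise differentiation of the formal series and the moment recurrence \eqref{recmoments}, with the moments $\mu_0,\dots,\mu_3$ surviving as the polynomial right-hand side; the paper merely runs the computation in the other direction (splitting $\Su_{\un_z}$ into its first four terms plus a tail, substituting the recurrence, and then multiplying by $4zt^4$) rather than verifying the assembled left-hand side term by term.
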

\begin{proof}
Note that  by \eqref{recmoments}, we have
\begin{align*}
	\Su_{\un_z}(t;z)&=\dfrac{\mu_0(z)}{t}+\dfrac{\mu_1(z)}{t^2}+\dfrac{\mu_2(z)}{t^3}+\dfrac{\mu_3(z)}{t^4}+\dfrac{1}{4z}\sum_{n=0}^\infty4z\,\dfrac{\mu_{n+4}(z)}{t^{n+5}}\\
    &=\dfrac{\mu_0(z)}{t}+\dfrac{\mu_1(z)}{t^2}+\dfrac{\mu_2(z)}{t^3}+\dfrac{\mu_3(z)}{t^4}+\dfrac{1}{4z}\sum_{n=0}^\infty(n+1)\dfrac{\mu_{n}(z)}{t^{n+5}}.
\end{align*}
Since
$$\partial_t\Su_{\un_z}(t;z)=-\sum_{n=0}^\infty(n+1)\dfrac{\mu_{n}(z)}{t^{n+2}},$$
it follows that
$$\Su_{\un_z}(t;z)
    =\dfrac{\mu_0(z)}{t}+\dfrac{\mu_1(z)}{t^2}+\dfrac{\mu_2(z)}{t^3}+\dfrac{\mu_3(z)}{t^4}-\dfrac{1}{4z\,t^3}\partial_t\Su_{\un_z}(t;z).$$
The desired result is obtained by multiplying both sides of the above equation by~$4zt^4.$
\end{proof}
\section{Laguerre-Freud equations}\label{Section4}
If $\un$ is a semiclassical linear functional satisfying the Pearson equation \eqref{pearson_eq}, then the coefficients of the corresponding TTRR \eqref{ttrrr} satisfied by its SMOP $(P_n)_{n\geq 0}$ satisfy a nonlinear system of equations derived from the so-called \textit{Laguerre-Freud equations} \cite{Said}.
\begin{align}
\label{1eq}\prodint{\psi\un,P_n^2}&=-\prodint{D(\phi\un),P_n^2},\\
\label{2eq}\prodint{\psi\un,P_{n+1}P_n}&=-\prodint{D(\phi\un_z),P_{n+1}P_n}.
\end{align}
The iteration  of \eqref{ttrrr} yields
\begin{multline*}
    x^2P_{n} (x)=P_{n+2}(x)+(b_{n+1}+b_n)P_{n+1}(x)+(a_{n+1}+b_n^2+a_n)P_{n}(x)\\+a_n(b_n+b_{n-1})P_{n-1}(x)+a_na_{n-1}P_{n-2}(x).
\end{multline*}
If we define $R_n=a_{n+1} +b_n^2+a_n$ and $T_n=a_n(b_n+b_{n-1})$, then 
\begin{equation}\label{ttrr2}x^2P_{n}(x)=P_{n+2}(x)+(b_{n+1}+b_{n})P_{n+1}(x)+R_nP_{n}(x)+T_nP_{n-1}(x)+a_na_{n-1}P_{n-2}(x).
\end{equation}
Iterating the above formula, we get
\begin{equation}\label{4rr}
x^4P_{n}(x)=P_{n+4}(x)+\sum_{k=n-4}^{n+3}\beta_{n,k}(z)P_{k}(x),\end{equation}
where
\begin{equation*}
\begin{aligned}
 \beta_{n,n+3}(z)&=\sum_{i=0}^3b_{n+i},\\
 \beta_{n,n+2}(z)&=R_{n+2}+(b_{n+1}+b_n)(b_{n+2}+b_{n+1})+R_n,\\
 \beta_{n,n+1}(z)&=T_{n+2}+(b_{n+1}+b_n)(R_{n+1}+R_n)+T_n,\\
 \beta_{n,n}(z)&=a_{n+2}a_{n+1}+(b_{n+1}+b_n)T_{n+1}+R_n^2+(b_{n-1}+b_n)T_{n}+a_na_{n-1},\\
  \beta_{n,n-1}(z)&=(b_{n+1}+b_{n})a_{n+1}a_{n}+(R_{n}+R_{n+1})T_n+(b_{n-1}+b_{n-2})a_{n}a_{n-1},\\
  \beta_{n,n-2}(z)&=a_na_{n-1}(R_n+R_{n+2})+T_nT_{n-1},\\
  \beta_{n,n-3}(z)&=a_{n-1}a_{n-2}T_{n}+a_{n}a_{n-1}T_{n-2},\\
  \beta_{n,n-4}(z)&=\prod_{i=n-3}^na_{i}.
\end{aligned}
\end{equation*}
Moreover, if we define $\mathbf{B}=(\beta_{n,k})_{n,k=0}$, where $\beta_{n,k}=0$ for all $k\ne n-3,\ldots, n+3$, then $$x^4\mathbf{P}=\mathbf{B}\mathbf{P}.$$
This implies that $\mathbf{B}=\mathbf{J}^4,$ where $\mathbf{J}$ is the monic Jacobi matrix given in \eqref{jacobi}.

If we write
	\begin{equation}\label{rev}
		P_n(x)=x^n+\sum_{k=0}^{n-1}\lambda_{n,k}x^k,
	\end{equation}
	then substituting \eqref{rev} into \eqref{ttrrr} and comparing the coefficients, we get
	\begin{equation}\label{cases}
		\begin{cases}
			b_n=\lambda_{n,n-1}-\lambda_{n+1,n},  \\
			\lambda_{n,n-1}b_n+a_n=\lambda_{n,n-2}-\lambda_{n+1,n-1}.
		\end{cases}
	\end{equation}
	Finally, differentiating \eqref{rev} and considering \eqref{cases}, we obtain
    \begin{align}
\notag P'_n(x)&=nx^{n-1}+(n-1)\lambda_{n,n-1}x^{n-2}+\mathcal{O}(x^{n-3})\\
		\notag &=nP_{n-1}(x)+((n-1)\lambda_{n,n-1}-n\lambda_{n-1,n-2})P_{n-2}(x)+\mathcal{O}(x^{n-3})\\
\label{der} &=nP_{n-1}(x)-(nb_{n-1}+\lambda_{n,n-1})P_{n-2}(x)+\mathcal{O}(x^{n-3}).
    \end{align}

\begin{teo}
    The coefficients of the TTRR  \eqref{ttrrr} associated with the linear functional $\un_z$ satisfy the Laguerre-Freud equations
    \begin{align}
        &4z\left[a_{n+2}a_{n+1}+T_{n+1}(b_{n+1}+b_n)+R^2_n+T_n(b_n+b_{n-1})+a_na_{n-1}\right]=2n+1,\label{LFeq1}\\[5pt]
        & 4z[a_{n+1}(T_{n+2}+T_n)-a_n(T_{n+1}+T_{n-1})-T_n(R_n+R_{n-1})+T_{n+1}(R_{n+1}+R_n)]=b_n,\label{LFeq12}
        \end{align}
    where $R_n=a_{n+1} +b_n^2+a_n$ and $T_n=a_n(b_n+b_{n-1})$.
\end{teo}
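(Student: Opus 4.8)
The plan is to start from the two Laguerre--Freud identities \eqref{1eq}--\eqref{2eq}, substitute $\phi(x)=x$ and $\psi(x)=4zx^4-1$, and reduce both sides to combinations of the recurrence coefficients by means of the expansion \eqref{4rr} of $x^4P_n$, the three-term recurrence \eqref{ttrrr}, orthogonality, and the differentiation formula \eqref{der}. Throughout I abbreviate $h_n=\prodint{\un_z,P_n^2}$ and use the elementary consequence $h_n=a_nh_{n-1}$ of \eqref{ttrrr}, obtained by comparing the two ways of computing $\prodint{\un_z,xP_nP_{n-1}}$.

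For \eqref{LFeq1}: on the left of \eqref{1eq} I would write $\prodint{\psi\un_z,P_n^2}=4z\prodint{\un_z,x^4P_n^2}-h_n$, and \eqref{4rr} together with orthogonality leaves only the diagonal coefficient, so this equals $(4z\beta_{n,n}(z)-1)h_n$. On the right, $-\prodint{D(\phi\un_z),P_n^2}=\prodint{\un_z,x(P_n^2)'}=2\prodint{\un_z,P_n'\,(xP_n)}$; expanding $xP_n$ by \eqref{ttrrr} and using $\deg P_n'=n-1$, only the $P_{n-1}$-term survives, and since the $P_{n-1}$-component of $P_n'$ has coefficient $n$ (as in \eqref{der}), this gives $2a_n\cdot n\,h_{n-1}=2n\,h_n$. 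Cancelling $h_n$ yields $4z\beta_{n,n}(z)=2n+1$, and inserting the formula for $\beta_{n,n}(z)$ from the list after \eqref{4rr} gives exactly \eqref{LFeq1}.

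For \eqref{LFeq12}: in \eqref{2eq} the $-1$ part of $\psi$ contributes nothing by orthogonality, so the left-hand side is $4z\prodint{\un_z,x^4P_{n+1}P_n}=4z\beta_{n,n+1}(z)h_{n+1}=4za_{n+1}\beta_{n,n+1}(z)h_n$. On the right, $\prodint{\un_z,x(P_{n+1}P_n)'}=\prodint{\un_z,xP_{n+1}'P_n}+\prodint{\un_z,xP_{n+1}P_n'}$; the second summand vanishes because $xP_n'$ has degree $n<n+1$, and in the first one, writing $xP_n$ via \eqref{ttrrr} and reading off the $P_n$- and $P_{n-1}$-components of $P_{n+1}'$ from \eqref{der}, the two contributions involving $b_n$ cancel and what remains is $-\lambda_{n+1,n}h_n$. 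Hence $4za_{n+1}\beta_{n,n+1}(z)=-\lambda_{n+1,n}$ for all $n$. Writing this at index $n$ and at index $n-1$ and subtracting, the right side becomes $\lambda_{n,n-1}-\lambda_{n+1,n}=b_n$ by \eqref{cases}, while on the left I would use $T_{n+1}=a_{n+1}(b_{n+1}+b_n)$ and $T_n=a_n(b_n+b_{n-1})$ to rewrite $a_{n+1}\beta_{n,n+1}(z)-a_n\beta_{n-1,n}(z)$ as $a_{n+1}(T_{n+2}+T_n)-a_n(T_{n+1}+T_{n-1})-T_n(R_n+R_{n-1})+T_{n+1}(R_{n+1}+R_n)$, which is precisely the bracket in \eqref{LFeq12}.

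Each step reduces to a short computation once the surviving components of a given inner product are identified; the part demanding the most care is the bookkeeping---tracking exactly which $P_k$ survive orthogonality in $\prodint{\un_z,xP_{n+1}'P_n}$, so that the subleading coefficient of $P_{n+1}'$ from \eqref{der} enters with the correct sign, and then executing the subtraction of the two consecutive instances of the second identity without sign slips, using the definitions of $R_n$ and $T_n$ to collapse everything to the stated form.
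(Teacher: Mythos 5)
Your proof is correct and follows essentially the same route as the paper: both equations are reduced to \eqref{1eq}--\eqref{2eq}, the left-hand sides are evaluated via the expansion \eqref{4rr} (equivalently the paper's iterated use of \eqref{ttrr2}), and the right-hand sides via \eqref{der}, \eqref{cases} and $a_nh_{n-1}=h_n$, with the final subtraction of consecutive instances yielding \eqref{LFeq12}. The only difference is cosmetic: your intermediate identity $4za_{n+1}\beta_{n,n+1}(z)=-\lambda_{n+1,n}$ is the sign-consistent version of the paper's \eqref{11}--\eqref{12}, and both lead to the same final equations.
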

\begin{proof}
    Let $h_n = \langle \mathbf{u}_z, P_n^2 \rangle$. By the orthogonality of $(P_n)_{n\geq 0}$, the coefficients in \eqref{ttrrr} satisfy
\begin{equation}\label{coeffs}
    b_n h_n = \langle \mathbf{u}_z, x P_n^2 \rangle, \quad
    a_n h_{n-1} = \langle \mathbf{u}_z, x P_n P_{n-1} \rangle = h_n.
\end{equation}

We first observe that  \eqref{1eq} is equivalent to
\[
\langle \mathbf{u}_z, \psi P_n^2 \rangle = 2\, \langle \mathbf{u}_z, \phi P_n P_n' \rangle,
\]
where $\phi(x) = x$ and $\psi(x) = 4z x^4 - 1$. Using \eqref{4rr}, we compute
\begin{align*}
    \langle \mathbf{u}_z, (4z x^4 - 1) P_n^2 \rangle
    &= 4z \langle \mathbf{u}_z, x^4P_n (P_n) \rangle - h_n \\
&= 4z \left[ a_{n+2} a_{n+1} + (b_{n+1} + b_n)^2 a_{n+1} + (a_{n+1} + b_n^2 + a_n)^2 \right] h_n \\
    &\quad + 4z \left[ (b_n + b_{n-1})^2a_n + a_na_{n-1} \right] h_n - h_n.
\end{align*}

On the other hand, since
$$
P_n'(x) = n P_{n-1}(x) + \mathcal{O}(x^{n-2}),
$$
we get
\begin{align*}
    \langle \mathbf{u}_z, x P_n P_n' \rangle
    &= \left\langle \mathbf{u}_z, x P_n \left(n P_{n-1}(x) + \mathcal{O}(x^{n-2}) \right) \right\rangle = n h_n.
\end{align*}
Thus, from $\langle \mathbf{u}_z, (4z x^4 - 1) P_n^2 \rangle = 2 \langle \mathbf{u}_z, x P_n P_n' \rangle$, we deduce \eqref{LFeq1}.

\bigskip

Similarly, equation \eqref{2eq} can be rewritten as
\[
\langle \mathbf{u}_z, \psi P_n P_{n+1} \rangle = \langle \mathbf{u}_z, \phi (P_n P_{n+1}' + P_{n+1} P_n') \rangle.
\]
From  \eqref{ttrr2} and \eqref{coeffs}, we deduce
\begin{align*}
    \langle \mathbf{u}_z, (4z x^4 - 1) P_n P_{n+1} \rangle
    &= 4z \langle \mathbf{u}_z, x^2 P_n (x^2 P_{n+1}) \rangle \\
    &= 4z \Big[ (b_{n+2} + b_{n+1}) h_{n+2} + (b_{n+1} + b_n) R_{n+1} h_{n+1} \\
    &\quad + R_n T_{n+1} h_n + a_{n+1} a_n T_n h_{n-1} \Big] \\
    &= 4z \left[ a_{n+1} T_{n+2} + T_{n+1} R_{n+1} + R_n T_{n+1} + a_{n+1} T_n \right] h_n.
\end{align*}

Moreover, from \eqref{der}, \eqref{cases}, and \eqref{coeffs}, we obtain
\begin{align*}
    \langle \mathbf{u}_z, x (P_n P_{n+1}' + P_{n+1} P_n') \rangle
    &= (n+1) b_n h_n - a_n \left[ (n+1) b_n + \lambda_{n+1,n} \right] h_{n-1} \\
    &= \lambda_{n+1,n} h_n.
\end{align*}

Thus,
\begin{equation}\label{11}
    4z \left[ a_{n+1} T_{n+2} + T_{n+1} R_{n+1} + R_n T_{n+1} + a_{n+1} T_n \right] = \lambda_{n+1,n}.
\end{equation}
Shifting $n \to n-1$, we obtain
\begin{equation}\label{12}
    4z \left[ a_n T_{n+1} + T_n R_n + R_{n-1} T_n + a_n T_{n-1} \right] = \lambda_{n,n-1}.
\end{equation}
Finally, subtracting \eqref{11} from \eqref{12} and using $b_n = \lambda_{n,n-1} - \lambda_{n+1,n}$, we get \eqref{LFeq12}.

 \end{proof}

 \begin{remark}
 Notice that the above expressions were deduced by Clarke and Shizgal in \cite{Clarke} by using a different approach.
 \end{remark}
As an alternative approach, see \cite[Chapter 3]{Mourad} and \cite[Theorem 4.2]{Walter1}, let us consider $v(x)=zx^4$, which is a twice differentiable function in $(0,\infty)$.  If we define $\An_n(x)$ and $\Bn_n(x)$ by
\begin{align}
\An_{n}(x)&=\dfrac{1}{h_n}\int^{\infty}_{0}\dfrac{v^\prime(x)-v^\prime(y)}{(x-y)} P_n^2(y) e^{-zy^{4}} dy + \dfrac{ P^{2}_{n} (0)}{ h_{n} x}\label{eq:A}\\[10pt]
\Bn_{n}(x)&=\dfrac{1}{h_{n-1}}\int^{\infty}_{0}\dfrac{v^\prime(x)-v^\prime(y)}{(x-y)} P_n(y)P_{n-1}(y)e^{-zy^{4} } dy + \dfrac{ P_{n} (0) P_{n-1}(0)} {h_{n-1} x}\label{eq:B}
\end{align}
where $h_n=\prodint{\un_z,P_n^2}$, then
\begin{align}
\label{B1}\Bn_{n+1}(x) + \Bn_n(x)&= (x-b_n)\An_n(x) - v^\prime(x),\\
\label{B2}a_{n+1}\An_{n+1}(x)-a_n \An_{n-1}(x)&= 1 + (x-b_n) \left[\Bn_{n+1}(x)-\Bn_n(x)\right].
\end{align}
Taking into account \eqref{ttrr2} and
$$\dfrac{v^\prime(x)-v^\prime(y)}{(x-y)}=4z(x^2+xy+y^2),$$
we get
\begin{align*}
\An_n(x)&=4z\left(x^2+b_nx+a_{n+1}+b_n^2+a_n\right)+ \frac{ P^{2}_{n} (0)}{ h_{n} x},\\
\Bn_n(x)&=4za_n\left(x+b_n+b_{n-1}\right)+  \frac{ P_{n} (0) P_{n-1}(0)} {h_{n-1} x} .
\end{align*}
The comparison of the coefficients of \eqref{B1} and \eqref{B2} yields

\begin{itemize}
\item [(i)] $4z\left(T_{n+1}+b_n R_n+T_n  \right )=\dfrac{ P^{2}_{n} (0)}{h_{n}}$,
\item[(ii)]  $4z \left(a_{n+1}R_{n+1}-a_{n}R_{n-1}+ b_n (T_{n+1}-T_n)\right)= 1+ \dfrac{P_{n}(0) (P_{n+1}(0) - a_{n} P_{n-1}(0))} {h_{n}}.$
\end{itemize}

\begin{teo} For $n\geq 0$, the coefficients $a_n$ and $b_n$ satisfy the nonlinear difference equation
\begin{multline}\label{LFI}
a_{n+1}\left(T_{n+2}+b_{n+1} R_{n+1}+T_{n+1}  \right )\left(T_{n+1}+b_n R_n+T_n  \right )\\=\left[a_{n+1}R_{n+1}+b_n T_{n+1}+a_{n+1}a_n-\dfrac{(n+1)}{4z}\right]^2
\end{multline}
\end{teo}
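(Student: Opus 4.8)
The plan is to prove \eqref{LFI} by showing that both sides equal the square of the single quantity $\rho_n := P_{n+1}(0)P_n(0)/h_n$.

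First I would rewrite the left-hand side. With $C_m := T_{m+1}+b_mR_m+T_m$ it is $a_{n+1}C_{n+1}C_n$, and identity (i) together with $h_{m+1}=a_{m+1}h_m$ gives $C_m=P_m^2(0)/(4zh_m)$, hence
$$a_{n+1}C_{n+1}C_n=a_{n+1}\,\frac{P_{n+1}^2(0)}{4zh_{n+1}}\,\frac{P_n^2(0)}{4zh_n}=\frac{1}{16z^2}\Big(\frac{P_{n+1}(0)P_n(0)}{h_n}\Big)^2=\frac{\rho_n^2}{16z^2}.$$
Since the right-hand side of \eqref{LFI} is $\big(4z(a_{n+1}R_{n+1}+b_nT_{n+1}+a_{n+1}a_n)-(n+1)\big)^2/(16z^2)$, it suffices to prove the \emph{linear} identity
$$\rho_n=4z(a_{n+1}R_{n+1}+b_nT_{n+1}+a_{n+1}a_n)-(n+1).$$

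For this I would derive two relations for $\rho_n$. Evaluating \eqref{ttrrr} at $x=0$, multiplying by $P_n(0)/h_n$, and using $h_n=a_nh_{n-1}$ together with (i), one gets $\rho_n+\rho_{n-1}=-b_nP_n^2(0)/h_n=-4zb_nC_n$. On the other hand, the right-hand side of (ii) is exactly $1+\rho_n-\rho_{n-1}$, so (ii) reads $\rho_n-\rho_{n-1}=4z\big(a_{n+1}R_{n+1}-a_nR_{n-1}+b_n(T_{n+1}-T_n)\big)-1$. Adding these two relations and inserting $C_n=T_{n+1}+b_nR_n+T_n$ produces an explicit formula for $2\rho_n$ in terms of the recurrence coefficients.

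It then remains to verify that this formula coincides with $2\big(4z(a_{n+1}R_{n+1}+b_nT_{n+1}+a_{n+1}a_n)-(n+1)\big)$. Their difference equals $(2n+1)$ minus $4z$ times a polynomial in $a_{n-1},\dots,a_{n+2}$ and $b_{n-1},\dots,b_{n+1}$; substituting $R_m=a_{m+1}+b_m^2+a_m$ and $T_m=a_m(b_m+b_{m-1})$ and collecting terms reduces that polynomial to $a_{n+2}a_{n+1}+T_{n+1}(b_{n+1}+b_n)+R_n^2+T_n(b_n+b_{n-1})+a_na_{n-1}$, which by the Laguerre--Freud equation \eqref{LFeq1} equals $(2n+1)/(4z)$. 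Hence the difference vanishes, the linear identity holds, and squaring it gives \eqref{LFI}. The only genuinely delicate step is this last reduction, in which a large number of terms must cancel; the boundary value $n=0$ is covered by the usual convention $a_0=0$ (equivalently $P_{-1}\equiv 0$, so that $\rho_{-1}=0$).
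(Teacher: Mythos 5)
Your proof is correct, and its first half (rewriting the left-hand side of \eqref{LFI} as $\rho_n^2/(16z^2)$ with $\rho_n:=P_{n+1}(0)P_n(0)/h_n$, via identity (i) and $h_{n+1}=a_{n+1}h_n$) is exactly the paper's closing step. Where you genuinely diverge is in how you reach the key linear identity $\rho_n=4z\bigl(a_{n+1}R_{n+1}+b_nT_{n+1}+a_{n+1}a_n\bigr)-(n+1)$. The paper obtains it by one more integral computation: it evaluates $\langle\un_z,(P_nP_{n+1})'\rangle$ in two ways, once as $(n+1)h_n$ and once by integration by parts against the weight $e^{-zx^4}$, which produces the boundary term $-P_{n+1}(0)P_n(0)$ plus $4z\langle\un_z,(x^2P_n)(xP_{n+1})\rangle$, the latter expanded through \eqref{ttrr2} and the TTRR. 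You instead stay entirely at the algebraic level of identities already on record: the TTRR \eqref{ttrrr} evaluated at $x=0$ (giving $\rho_n+\rho_{n-1}=-4zb_n(T_{n+1}+b_nR_n+T_n)$), the compatibility relation (ii) (giving $\rho_n-\rho_{n-1}$), and the Laguerre--Freud equation \eqref{LFeq1} to close the final cancellation. I verified that cancellation: after substituting $R_m=a_{m+1}+b_m^2+a_m$ and $T_m=a_m(b_m+b_{m-1})$, the residual polynomial does reduce term by term to the bracket in \eqref{LFeq1}, so your argument is complete. The trade-off is clear: the paper's route requires a short extra use of the linear functional but little bookkeeping, while yours makes no further appeal to $\un_z$ at all and exhibits \eqref{LFI} as a purely algebraic consequence of (i), (ii) and \eqref{LFeq1}, at the price of a heavier polynomial verification; your remark on the conventions $a_0=0$, $P_{-1}\equiv 0$ (so $\rho_{-1}=0$) settles the $n=0$ case, which the paper leaves implicit.
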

\begin{proof}
Notice that $$\prodint{\un_z,(P_{n}P_{n+1})^\prime}=\prodint{\un_z,P_{n}P_{n+1}^\prime}=(n+1)h_n.$$ On the other hand, using integration by parts and \eqref{ttrr2}
$$\begin{aligned}
    \prodint{\un_z,(P_{n}P_{n+1})^\prime}&=-P_{n+1}(0)P_{n}(0)+4z\prodint{\un_z,(x^2P_{n})(xP_{n+1})}\\
    &=-P_{n+1}(0)P_{n}(0)+4z\left[h_{n+2}+b_{n+1}(b_{n+1}+b_n)h_{n+1}+a_nR_{n}h_n\right].
\end{aligned}$$
Therefore
$$(n+1)=-\dfrac{P_{n+1}(0)P_{n}(0)}{h_n}+4z\left[a_{n+2}a_{n+1}+b_{n+1}T_{n+1}+a_nR_{n}\right]$$
Notice also that
$$\dfrac{P_{n+1}^2(0)}{h_{n+1}}\dfrac{P_{n}^2(0)}{h_{n}}=\dfrac{1}{a_{n+1}}\dfrac{\left(P_n(0)P_{n+1}(0)\right)^2}{h_{n}^2}$$
and from (i) the result follows.
\end{proof}

Next, from the above nonlinear equations we analyze the asymptotic behavior of the coefficients $a_n$ and $b_n$ (in $n$) appearing in the TTRR \eqref{ttrrr}. Our aim is to determine the leading-order terms in the asymptotic expansions of these coefficients.\\

According to \eqref{eq:diffab}, we consider the ansatz
\[
a_n = n^r\,\tilde{a}_n\,z^{-1/2}, \qquad b_n = n^s\,\tilde{b}_n\,z^{-1/4},
\]
where the sequences $\tilde{a}_n$ and $\tilde{b}_n$ converge to real numbers $A$ and $B$, respectively, which must be strictly positive, such as $n \to \infty$. Thus, we adopt the asymptotic approximations
\[
a_n \sim A\,n^r\,z^{-1/2}, \qquad b_n \sim B\,n^s\,z^{-1/4} \quad \text{as } n \to \infty,
\]
where $r$ and $s$ are unknown positive exponents, and $\sim$ denotes asymptotic equivalence.  Moreover
$$R_{n}\sim z^{-1/2}\left(2An^r+Bn^{2s}\right),\qquad T_{n}\sim2z^{-3/4}AB\,n^{r+s}.$$
Substituting these expressions into \eqref{LFeq1}  and \eqref{LFI} we get $r=1/2$ and $s=1/4$. Moreover, the constants $A$ and $B$ must satisfy
\begin{align}
\label{Wq1}\left[2A^2 + 8AB^2 + (2A+B^2)^2 \right]  = \dfrac{1}{2},\\
\label{Wq2}A\left(6 AB + B^3\right)^2 = \left(3A^2 + 3AB^2 - 1/4\right)^2.
\end{align}
Since \eqref{Wq1} can be rewritten as $$3A^2+6AB^2+\dfrac{B^4}{2}=1/4,$$
then replacing by \eqref{Wq2}
$$A\left(6 AB + B^3\right)^2=\dfrac{B^2}{4}\left(6 AB + B^3\right)^2.$$
Thus, $A=B^2/4$. Replacing this in \eqref{Wq2} we get
$$\dfrac{5}{4}B^4=\pm\left(\dfrac{15}{16}B^4-\dfrac{1}{4}\right)$$
 and since $B>0$ we get  that  and $B=2\,(140)^{-1/4}$ and therefore
$A=(140)^{-1/2}$ . From the above,  We conclude that
\begin{equation}\label{asintoticaba}
a_n \sim \sqrt{\frac{n}{140z}}, \qquad b_n \sim 2\,\sqrt[4]{\frac{n}{140z}}.
\end{equation}

From here
\begin{equation}\label{comp}
xP_n(x)\sim P_{n+1}(x)+b_nP_{n}(x)+\dfrac{b_n^2}{4}P_{n-1}(x)\end{equation}
The above result can be obtained using the symmetrization process to the symmetric orthogonal polynomials associated with the Freud weight function $w(x)=|x|^{2\lambda+1}e^{-zx^8}$ which is
supported on the real line \cite[Theorem 4.6.]{Ana} (see also \cite{Clarke}).

An interesting open problem is to find the asymptotic expansions in $n$ of $a_n$ and~$b_n.$
\section{Holonomic differential equation}\label{Sec, Holo}

\begin{teo}[Structure relation]\label{structure}
The monic polynomial sequence $(P_n)_{n\geqslant 0},$ orthogonal with respect to the linear functional $\un_z,$ satisfies the differential--difference relation
    \begin{equation}\label{eq:structure}
        \begin{aligned}
        x\,P'_{n+1}(x)=&(n+1)P_{n+1}(x)\\
        &+4z[a_{n+1}(T_{n+2}+T_n)+T_{n+1}(R_{n+1}+R_n)]P_n(x)\\
        &+4z[a_{n+1}a_n(R_{n+1}+R_{n-1})+T_{n+1}T_n]P_{n-1}(x)\\
        &+4za_n[a_{n-1}T_{n+1}+a_{n+1}T_{n-1}]P_{n-2}(x)\\
        &+4za_{n+1}a_na_{n-1}a_{n-2}P_{n-3}(x).
    \end{aligned}
    \end{equation}
\end{teo}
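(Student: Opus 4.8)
The plan is to expand $xP'_{n+1}$ in the orthogonal basis $(P_k)_{k\ge 0}$ and to identify the nonvanishing Fourier coefficients by means of the Pearson equation \eqref{pearson_eq}. Since $\deg\bigl(xP'_{n+1}\bigr)=n+1$, I write
\[
xP'_{n+1}(x)=\sum_{k=0}^{n+1}c_{n,k}P_k(x),\qquad c_{n,k}=\frac{1}{h_k}\prodint{\un_z,xP'_{n+1}P_k}.
\]
Comparing leading coefficients (from $P_{n+1}(x)=x^{n+1}+\cdots$ one gets $xP'_{n+1}(x)=(n+1)x^{n+1}+\cdots$) yields $c_{n,n+1}=n+1$, which produces the first term on the right-hand side of \eqref{eq:structure}, while $c_{n,k}=0$ automatically for $k>n+1$ by degree.

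For $0\le k\le n$ I would use \eqref{pearson_eq} in the form $D(\phi\,\un_z)=-\psi\,\un_z$ with $\phi(x)=x$ and $\psi(x)=4zx^4-1$. Starting from $c_{n,k}h_k=\prodint{\phi\,\un_z,P'_{n+1}P_k}$, the Leibniz rule together with $\prodint{\phi\,\un_z,(P_{n+1}P_k)'}=-\prodint{D(\phi\,\un_z),P_{n+1}P_k}=\prodint{\psi\,\un_z,P_{n+1}P_k}$ gives
\[
c_{n,k}h_k=\prodint{\un_z,\psi P_{n+1}P_k}-\prodint{\un_z,xP_{n+1}P'_k}.
\]
Since $\deg\bigl(xP'_k\bigr)=k\le n<n+1$, the last term vanishes by orthogonality, and in $\prodint{\un_z,\psi P_{n+1}P_k}$ the contribution of the constant summand $-1$ of $\psi$ vanishes as well for $k\le n$. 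Hence $c_{n,k}=4z\,h_k^{-1}\prodint{\un_z,x^4P_{n+1}P_k}$.

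Next I would invoke \eqref{4rr} with $n$ replaced by $n+1$, that is, $x^4P_{n+1}(x)=P_{n+5}(x)+\sum_{j=n-3}^{n+4}\beta_{n+1,j}(z)P_j(x)$. Pairing with $P_k$ shows at once that $c_{n,k}=0$ for $0\le k\le n-4$ (so the expansion terminates at $P_{n-3}$, as claimed) and $c_{n,k}=4z\,\beta_{n+1,k}(z)$ for $n-3\le k\le n$. It then remains to evaluate the four coefficients $\beta_{n+1,n}$, $\beta_{n+1,n-1}$, $\beta_{n+1,n-2}$, $\beta_{n+1,n-3}$. The cleanest route is to obtain them directly by applying $x^2$ to \eqref{ttrr2} with $n$ replaced by $n+1$ (i.e.\ to $x^2P_{n+1}$) and collecting the coefficients of $P_n,P_{n-1},P_{n-2},P_{n-3}$; rewriting the resulting cubic products via $T_m=a_m(b_m+b_{m-1})$ — for instance $(b_{n+2}+b_{n+1})a_{n+2}a_{n+1}=a_{n+1}T_{n+2}$ and $(b_n+b_{n-1})a_{n+1}a_n=a_{n+1}T_n$ — turns $\beta_{n+1,n}$ into $a_{n+1}(T_{n+2}+T_n)+T_{n+1}(R_{n+1}+R_n)$, and the analogous simplifications produce the coefficients of $P_{n-1}$, $P_{n-2}$, $P_{n-3}$ recorded in \eqref{eq:structure}. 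For small $n$ the lower terms drop out under the standing conventions $a_j=0$ for $j\le 0$ and $P_j=0$ for $j<0$ inherited from \eqref{ttrrr}, so the identity holds for all $n\ge 0$.

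There is no genuine obstacle beyond this final round of algebraic bookkeeping; the one place that needs care is the composition of the two index shifts — first $n\mapsto n+1$ inside \eqref{4rr} and \eqref{ttrr2}, then the recombination of the products $b\cdot a\cdot a$ into $a\cdot T$ — together with the verification that the lowest surviving term is precisely $P_{n-3}$ and not $P_{n-4}$.
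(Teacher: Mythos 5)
Your proposal is correct and follows essentially the same route as the paper: expand $xP'_{n+1}$ in the basis $(P_k)_{k\geq 0}$, use the Pearson equation \eqref{pearson_eq} together with orthogonality to reduce each Fourier coefficient to $4z\,h_k^{-1}\prodint{\un_z,x^4P_{n+1}P_k}$, and then read off the surviving coefficients from the expansion \eqref{4rr} (equivalently, from iterating \eqref{ttrr2}). The only difference is one of completeness: the paper's proof stops at the vanishing of the coefficients for $k\leq n-4$ and records the identification $\alpha_{n+1,k}=4z\beta_{n+1,k}$ only in a later remark, whereas you carry out the bookkeeping with the $T_m$-recombinations explicitly — and your computed coefficients do match the theorem.
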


\begin{proof}
    Since $(P_n)_{n\geqslant 0}$ is a basis of the linear space of polynomials and since the degree of $ x\,P'_{n+1}(x)$ is $n+1$, we can write
    $$
    x P'_{n+1}(x) = (n+1)P_{n+1}(x)+\sum_{k=0}^n\alpha_{n+1,k}P_k(x),
    $$
    with
    $$
\alpha_{n+1,k}h_k=\langle \un_z,x P'_{n+1} P_k\rangle, \quad 0\leqslant k \leqslant n,
$$
    and $h_k=\langle \un_z, P_k^2\rangle \ne 0$. According to the Pearson equation \eqref{pearson_eq},   we have
    \begin{align*}
\alpha_{n+1,k}h_k&=\langle \un_z,x P'_{n+1} P_k\rangle\\
    &=\langle \un_z, x(P_{n+1} P_k)'\rangle -\langle \un_z, xP_{n+1}P'_k\rangle \\
    &=\langle \un_z, P_{n+1}[(4zx^4-1)P_k-xP'_k]\rangle\\
    &=4z\langle \un_z,P_{n+1} (x^4P_k)\rangle.    \end{align*}
  From the orthogonality of $(P_{n+1})_{n\geq0}$, we deduce $\alpha_{n+1,k}=0$ for $0\leq k \leq n-4$. \end{proof}
Next, we will obtain a lowering operator for $P_n$ acting on the variable $x$.
\begin{teo}[Lowering operator]
    For $n\geqslant 0$, define
    \begin{align*}
C_n(x,z) =&4z[a_{n+1}(T_{n+2}+T_n)+T_{n+1}(R_{n+1}+R_n)]\\
&+\frac{4z}{a_n}[a_{n+1}a_n(R_{n+1}+R_{n-1})+T_{n+1}T_n](x-b_n)\\
&+\frac{4z}{a_{n-1}}[a_{n-1}T_{n+1}+a_{n+1}T_{n-1}][(x-b_{n-1})(x-b_n)-a_n]\\
&+4za_{n+1}\Big((x-b_{n-2})[(x-b_{n-1})(x-b_n)-a_n]-(x-b_n) \Big)
\end{align*}
    and
    \begin{align*}
        D_n(x,z)=&-(n+1)+\frac{4z}{a_n}[a_{n+1}a_n(R_{n+1}+R_{n-1})+T_{n+1}T_n]\\
        &+\frac{4z}{a_{n-1}}[a_{n-1}T_{n+1}+a_{n+1}T_{n-1}](x-b_{n-1})\\[5pt]
        &+4za_{n+1}[(x-b_{n-2})(x-b_{n-1})-1].
    \end{align*}
    Then, the SMOP $(P_n(_{n\geqslant 0}$ with respect to the linear functional $\un_z$ satisfies
    \begin{equation}\label{eq:lowering}
    L_nP_{n+1}(x)\,=\,P_n(x), \quad n\geqslant 0,
    \end{equation}
    where $L_n =A_n(x,z)\dfrac{d}{dx}+B_n(x,z)$ and
    \begin{equation*}
    A_n(x,z)=\frac{x}{C_n(x,z)}, \quad B_n(x,z)=\frac{D_n(x,z)}{C_n(x,z)}.
    \end{equation*}
\end{teo}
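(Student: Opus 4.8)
The plan is to deduce the lowering relation directly from the structure relation of Theorem~\ref{structure} by eliminating $P_{n-1}$, $P_{n-2}$ and $P_{n-3}$ in favour of $P_n$ and $P_{n+1}$. The mechanism is the ``descending'' form of the three-term recurrence \eqref{ttrrr}: the $a_k$ are nonzero (they are part of the recurrence data, since $\un_z$ is quasi-definite and $a_k=h_k/h_{k-1}$ by \eqref{coeffs}), so we may write
\[
a_k\,P_{k-1}(x)=(x-b_k)P_k(x)-P_{k+1}(x),\qquad k\geqslant 1 .
\]
Taking $k=n$ expresses $P_{n-1}$ as a combination of $P_n$ and $P_{n+1}$ whose coefficients are polynomials in $x$ of degree $\leqslant 1$; feeding that into the case $k=n-1$ expresses $P_{n-2}$ the same way, with coefficients of degree $\leqslant 2$; one further step, $k=n-2$, does it for $P_{n-3}$, with coefficients of degree $\leqslant 3$. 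Concretely,
\[
P_{n-1}=\frac{(x-b_n)P_n-P_{n+1}}{a_n},\qquad
P_{n-2}=\frac{(x-b_{n-1})P_{n-1}-P_n}{a_{n-1}},\qquad
P_{n-3}=\frac{(x-b_{n-2})P_{n-2}-P_{n-1}}{a_{n-2}} .
\]

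First I would insert these three identities, successively, into \eqref{eq:structure} and collect the coefficients of $P_n$ and of $P_{n+1}$. The right-hand side then reduces to a combination of $P_n$ and $P_{n+1}$ alone, that is,
\[
xP'_{n+1}(x)=C_n(x,z)\,P_n(x)-D_n(x,z)\,P_{n+1}(x),
\]
where $C_n$ is the sum of the $P_n$-coefficient appearing directly in \eqref{eq:structure} together with the three $P_n$-contributions produced by substituting $P_{n-1}$, $P_{n-2}$, $P_{n-3}$ (note that each of $P_{n-1}$ and $P_{n-2}$ contributes through more than one term of \eqref{eq:structure}), while $-D_n$ gathers the term $(n+1)$ already present with the corresponding $P_{n+1}$-contributions. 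A term-by-term comparison of the blocks so obtained with the four summands of $C_n(x,z)$ and the three summands of $D_n(x,z)$ in the statement completes the identification. Rearranging gives $xP'_{n+1}(x)+D_n(x,z)P_{n+1}(x)=C_n(x,z)P_n(x)$, and dividing by $C_n(x,z)$ yields $L_nP_{n+1}=P_n$ with $L_n=\dfrac{x}{C_n}\dfrac{d}{dx}+\dfrac{D_n}{C_n}$, which is \eqref{eq:lowering}.

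I expect the only genuine difficulty to be the bookkeeping: one must track how the degree-$1$, degree-$2$ and degree-$3$ polynomial multipliers produced by the successive descents of \eqref{ttrrr} distribute over $P_n$ and $P_{n+1}$, and then regroup the outcome into exactly the blocks written in $C_n$ and $D_n$; this is where index and sign slips are easy. To keep it manageable I would first write $P_{n-2}$ and $P_{n-3}$ as explicit combinations $\gamma_{n-2}(x)P_n+\delta_{n-2}(x)P_{n+1}$ and $\gamma_{n-3}(x)P_n+\delta_{n-3}(x)P_{n+1}$, and only afterwards multiply by the coefficients $4za_n[a_{n-1}T_{n+1}+a_{n+1}T_{n-1}]$ and $4za_{n+1}a_na_{n-1}a_{n-2}$ coming from \eqref{eq:structure}. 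Finally I would observe that $C_n(x,z)$ is not the zero polynomial --- its leading term in $x$ is $4za_{n+1}x^3$, which is nonzero --- so that $A_n=x/C_n$ and $B_n=D_n/C_n$ are well-defined rational functions and \eqref{eq:lowering} holds as an identity of rational functions in $x$, equivalently of polynomials after multiplying through by $C_n(x,z)$.
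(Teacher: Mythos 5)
Your proposal is correct and follows essentially the same route as the paper's proof: use the descending form of the TTRR \eqref{ttrrr} to express $P_{n-1}$, $P_{n-2}$, $P_{n-3}$ as polynomial combinations of $P_n$ and $P_{n+1}$, substitute into the structure relation \eqref{eq:structure}, collect the coefficients of $P_n$ and $P_{n+1}$, and rearrange to get $xP'_{n+1}+D_nP_{n+1}=C_nP_n$. Your additional observation that $C_n(x,z)\not\equiv 0$ (leading term $4za_{n+1}x^3$), so that $A_n$ and $B_n$ are well defined, is a small point the paper leaves implicit.
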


\begin{proof}
    Using the TTRR \eqref{ttrrr}, we write
    \begin{align*}
        P_{n-1}(x)=&\frac{1}{a_n}[(x-b_n)P_n(x)-P_{n+1}(x)],\\
        P_{n-2}(x)=&\frac{1}{a_na_{n-1}}\Big([(x-b_{n-1})(x-b_{n})-a_n]P_n(x)-(x-b_{n-1})P_{n+1}(x) \Big),\\
        P_{n-3}(x)=&\frac{1}{a_na_{n-1}a_{n-2}}\Big((x-b_{n-2})[(x-b_{n-1})(x-b_n)-a_n]-a_{n-1}(x-b_n) \Big)P_n(x)\\
        &-\frac{1}{a_na_{n-1}a_{n-2}}[(x-b_{n-2})(x-b_{n-1})-a_{n-1}]P_{n+1}(x).
    \end{align*}
    Substituting these expressions in \eqref{eq:structure} and rearranging, we obtain the desired result.
\end{proof}
 We note that the coefficients $\alpha_{n+1,k}$ in Theorem \ref{structure} can be written as $\alpha_{n+1,k}=4z\beta_{n+1,k}$ for $n-3\leq k\leq n,$ where $\beta_{n+1,k}$ satisfy \eqref{4rr}. This implies that $C_{n}(x,z)$ defined in the above theorem can be written as
\begin{equation*}
\begin{aligned}
C_{n}(x,z)=&4z\Big[\beta_{n+1,n}+\dfrac{(x-b_n)}{a_{n}}\beta_{n+1,n-1}+\dfrac{[(x-b_{n-1})(x-b_{n})-a_n]}{a_{n}a_{n-1}}\beta_{n+1 , n-2}\\
&+\dfrac{(x-b_{n-2})[(x-b_{n-1})((x-b_{n})-a_{n})-a_{n-1}(x-b_n)]}{a_{n}a_{n-1}a_{n-2}}\beta_{n+1,n-3}\Big].
\end{aligned}
\end{equation*}

\begin{remark}	
Recall that the TTRR \eqref{jacobi} can be written in a matrix form as $x\mathbf{P}=\mathbf{J}\mathbf{P}$. Now, define
$\mathbf{L}=4z\left(\mathbf{J}^4\right)_{-}+\mathbf{N}
$ where $(\mathbf{A})_-$ denotes the strictly lower triangular part  of matrix $\mathbf{A}$   and  $\mathbf{N}$ is the diagonal matrix $\mathbf{N}=\operatorname{diag}(0,1,2,3\cdots).$
On the other hand, taking into account  the relation between the coefficients of    \eqref{eq:structure}  and the $\beta_{n,k}$ defined  in \eqref{4rr} we can write  \begin{equation*}
x\partial_{x}\Pn=\mathbf{L}\Pn.
\end{equation*}

Notice that $$\mathbf{J}(\partial_x\Pn)=x\partial_x\Pn+\Pn$$ as well as
\begin{align*}
x\mathbf{J}(\partial_x\Pn)&=(\mathbf{J}\mathbf{L})\,\Pn,\\	
x\mathbf{J}(\partial_x\Pn)&=x(x\partial_x\Pn+\Pn)=(\mathbf{L}\mathbf{J})\,\Pn+\mathbf{J}\,\Pn.
\end{align*}
The above implies the compatibility condition of the Lax pair
$$[\mathbf{J}, \mathbf{L}] \Pn:= (\mathbf{J}\mathbf{L})\,\Pn-(\mathbf{L}\mathbf{J})\,\Pn=\mathbf{J}\,\Pn.$$
\end{remark}
\begin{coro}[Raising operator]
    For $n\geqslant 0$, let define the operator
    $$
    \widehat{L}_n=-a_{n+1} A_n(x,z)\frac{d}{dx}-[a_{n+1}B_n(x,z)-(x-b_{n+1})].
    $$
    Then, the SMOP $(P_n)_{n\geqslant 0}$  with respect to the linear functional $\un_z$ satisfies
    \begin{equation}\label{eq:raising}
    \widehat{L}_nP_{n+1}(x)\,=\,P_{n+2}(x), \quad n\geqslant 0.
    \end{equation}
\end{coro}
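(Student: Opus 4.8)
The plan is to derive the raising operator directly from the lowering operator by combining it with the three-term recurrence relation. The key identity is \eqref{ttrrr} written at level $n+1$, namely
\[
x P_{n+1}(x) = P_{n+2}(x) + b_{n+1} P_{n+1}(x) + a_{n+1} P_n(x),
\]
which I would rearrange as
\[
P_{n+2}(x) = (x - b_{n+1}) P_{n+1}(x) - a_{n+1} P_n(x).
\]
The point is that the lowering operator \eqref{eq:lowering} already expresses $P_n(x)$ in terms of $P_{n+1}(x)$ and its derivative: $P_n(x) = L_n P_{n+1}(x) = A_n(x,z) P_{n+1}'(x) + B_n(x,z) P_{n+1}(x)$. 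Substituting this into the displayed expression for $P_{n+2}(x)$ gives
\[
P_{n+2}(x) = (x - b_{n+1}) P_{n+1}(x) - a_{n+1}\bigl(A_n(x,z) P_{n+1}'(x) + B_n(x,z) P_{n+1}(x)\bigr),
\]
which I then collect as
\[
P_{n+2}(x) = -a_{n+1} A_n(x,z)\,P_{n+1}'(x) + \bigl[(x - b_{n+1}) - a_{n+1} B_n(x,z)\bigr]\,P_{n+1}(x).
\]
Recognizing the right-hand side as exactly $\widehat{L}_n P_{n+1}(x)$ with $\widehat{L}_n = -a_{n+1} A_n(x,z)\tfrac{d}{dx} - [a_{n+1} B_n(x,z) - (x - b_{n+1})]$ finishes the proof.

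This is essentially a one-line computation once the lowering operator is in hand, so there is no genuine obstacle; the only thing to be careful about is the sign bookkeeping in the bracket $[a_{n+1}B_n(x,z) - (x-b_{n+1})]$ versus $[(x-b_{n+1}) - a_{n+1}B_n(x,z)]$, and confirming that $\widehat{L}_n$ is a bona fide first-order differential operator with rational coefficients (which it is, since $A_n$ and $B_n$ are rational in $x$ by their definitions in terms of $C_n$ and $D_n$). One could equally phrase the argument at the matrix level using the Lax pair $x\partial_x \mathbf{P} = \mathbf{L}\mathbf{P}$ and $x\mathbf{P} = \mathbf{J}\mathbf{P}$ from the preceding remark, reading off the raising relation from the superdiagonal structure, but the direct substitution above is the cleanest route and I would present that one.
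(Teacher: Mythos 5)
Your proof is correct and follows essentially the same route as the paper: both eliminate $P_n$ by combining the lowering relation $P_n=L_nP_{n+1}$ with the three-term recurrence at level $n+1$ and then read off $\widehat{L}_n$; the paper merely writes the computation in terms of $C_n$ and $D_n$ (i.e., starting from $xP'_{n+1}=-D_nP_{n+1}+C_nP_n$) rather than $A_n$ and $B_n$. No gaps; the sign bookkeeping in your final bracket is right.
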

\begin{proof}
    Using \eqref{eq:lowering} and \eqref{ttrrr}, we get
    \begin{align*}
        xP'_{n+1}(x)=&-D_n(x,z)P_{n+1}(x)+C_n(x,z)P_n(x)\\
        =&-D_n(x,z)P_{n+1}(x)+\frac{C_n(x,z)}{a_{n+1}}[(x-b_{n+1})P_{n+1}(x)-P_{n+2}(x)]\\
        =&-\left[D_n(x,z)- \frac{C_n(x,z)}{a_{n+1}}(x-b_{n+1})\right]P_{n+1}(x)-\frac{C_n(x,z)}{a_{n+1}}P_{n+2}(x).
    \end{align*}
     Finally,  \eqref{eq:raising} follows by rearranging the terms.
\end{proof}
Using the lowering operator \eqref{eq:lowering}, we deduce a second-order holonomic differential equation that acts on the variable $x$ for $P_{n+1}(x)$.

\begin{teo}[Second-order holonomic differential equation]
For $n\geqslant 0$, let define the second-order differential operator
\begin{align*}
\mathcal{D}_{n}=&a_nA_{n}(x,z)A_{n-1}(x,z)\frac{d^2}{dx^2}\\
&+[A_n(x,z)(a_nB_{n-1}(x,z)-x+b_n)+a_nA_{n-1}(x,z)(A_n'(x,z)+B_n(x,z))]\frac{d}{dx}\\
&+[a_nB_n'(x,z)A_{n-1}(x,z)+B_n(x,z)(a_nB_{n-1}(x,z)-x+b_n)+1].
\end{align*}
Then, $\mathcal{D}_nP_{n+1}(x)=0$ for $n\geqslant 0$.
\end{teo}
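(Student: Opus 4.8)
The plan is to combine the lowering operator from the previous theorem with the one-step-down version of itself. By \eqref{eq:lowering} we have $L_n P_{n+1}(x) = P_n(x)$ and, shifting $n \to n-1$, $L_{n-1} P_n(x) = P_{n-1}(x)$, where $L_k = A_k(x,z)\frac{d}{dx} + B_k(x,z)$. The idea is to apply $L_{n-1}$ to the identity $L_n P_{n+1} = P_n$, so that the right-hand side becomes $P_{n-1}$, and then eliminate $P_{n-1}$ and $P_n$ from the resulting relation using the three-term recurrence \eqref{ttrrr}, leaving a second-order differential equation purely in $P_{n+1}$.

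First I would expand $L_{n-1}(L_n P_{n+1})$. Writing $L_n P_{n+1} = A_n P_{n+1}' + B_n P_{n+1}$ and applying $A_{n-1}\frac{d}{dx} + B_{n-1}$, the Leibniz rule gives
\begin{align*}
L_{n-1}L_n P_{n+1} = {}& A_{n-1}A_n\,P_{n+1}'' + \bigl[A_{n-1}(A_n' + B_n) + B_{n-1}A_n\bigr]P_{n+1}' \\
& + \bigl[A_{n-1}B_n' + B_{n-1}B_n\bigr]P_{n+1},
\end{align*}
and this equals $P_{n-1}(x)$. Next I would use the three-term recurrence in the form $P_{n-1}(x) = \frac{1}{a_n}\bigl[(x-b_n)P_n(x) - P_{n+1}(x)\bigr]$ together with $P_n(x) = L_n P_{n+1}(x) = A_n P_{n+1}' + B_n P_{n+1}$ to express the right-hand side entirely in terms of $P_{n+1}$ and $P_{n+1}'$: namely $P_{n-1} = \frac{1}{a_n}\bigl[(x-b_n)(A_n P_{n+1}' + B_n P_{n+1}) - P_{n+1}\bigr]$. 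Substituting this into the displayed equation and multiplying through by $a_n$ yields
\begin{align*}
a_n A_{n-1}A_n\,P_{n+1}'' &+ \bigl[a_n A_{n-1}(A_n' + B_n) + a_n B_{n-1}A_n - (x-b_n)A_n\bigr]P_{n+1}' \\
&+ \bigl[a_n A_{n-1}B_n' + a_n B_{n-1}B_n - (x-b_n)B_n + 1\bigr]P_{n+1} = 0,
\end{align*}
which is exactly $\mathcal{D}_n P_{n+1}(x) = 0$ after grouping the coefficient of $P_{n+1}'$ as $A_n(a_n B_{n-1} - x + b_n) + a_n A_{n-1}(A_n' + B_n)$ and the coefficient of $P_{n+1}$ as $a_n B_n' A_{n-1} + B_n(a_n B_{n-1} - x + b_n) + 1$.

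There is essentially no serious obstacle here: the argument is a direct computation once the lowering operator is in hand, and the main point is bookkeeping — being careful with the product rule when $L_{n-1}$ hits the first-order operator $L_n$, and correctly using the recurrence to trade $P_{n-1}$ and $P_n$ for $P_{n+1}$ and its derivative. The only mild subtlety is that $A_{n-1}$ and $B_{n-1}$ involve division by $a_{n-1}$ and $a_{n-2}$ (through $C_{n-1}$), so the manipulations are valid for $n$ large enough that these coefficients are nonzero; since $\un_z$ is positive definite, all $a_k > 0$, so this holds for every $n \geq 0$ for which the operators are defined. I would close by remarking that this holonomic equation has polynomial coefficients after clearing the denominators coming from $C_n$ and $C_{n-1}$, which is what is needed for the electrostatic interpretation of the zeros in Section \ref{elect_Section}.
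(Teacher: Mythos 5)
Your proposal is correct and follows essentially the same route as the paper: the paper encodes the same computation as the operator identity $0=\big[a_nL_{n-1}L_n-(x-b_n)L_n+1\big]P_{n+1}(x)$ coming from the TTRR, then expands $L_{n-1}L_n$ by the Leibniz rule, which is exactly your application of $L_{n-1}$ to $L_nP_{n+1}=P_n$ followed by elimination of $P_{n-1}$ and $P_n$. The resulting coefficients match the stated operator $\mathcal{D}_n$, so no gap.
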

\begin{proof}
    Using the lowering operator \eqref{eq:lowering} the TTRR  reads as
\begin{equation}\label{eq:ttrrL_n}
0 = \big[a_n L_{n-1}L_n - (x - b_n)L_n + 1\big]P_{n+1}(x).
\end{equation}
To proceed, we compute the action of \( L_{n-1}L_n \) on \( P_{n+1}(x) \):
\begin{align*}
    L_{n-1}L_nP_{n+1}(x)
    &= \left(A_{n-1}(x,z)\frac{d}{dx} + B_{n-1}(x,z)\right)\left(A_n(x,z)\frac{d}{dx} + B_n(x,z)\right)P_{n+1}(x) \\
    &= A_{n-1}(x,z)A_n(x,z)P_{n+1}''(x) \\
    &\quad + \big[A_{n-1}(x,z)(A_n'(x,z) + B_n(x,z)) + A_n(x,z)B_{n-1}(x,z)\big]P_{n+1}'(x) \\
    &\quad + \big[B_n'(x,z)A_{n-1}(x,z) + B_n(x,z)B_{n-1}(x,z)\big]P_{n+1}(x).
\end{align*}
This expansion provides a full expression for \( L_{n-1}L_nP_{n+1}(x) \) in terms of the polynomial \( P_{n+1}(x) \) and its derivatives. By substituting this result, along with the definition of \( L_n \), into \eqref{eq:ttrrL_n}, the desired result follows.
\end{proof}
\section{The role of the variable $z$} \label{section6}
Observe that making a change of variable in the integral \eqref{lfpd} we see that the moments of the linear functional $\un_z$ can be written as
\begin{equation}\label{momenttransf}
    \mu_n(z)=\dfrac{1}{z^{(n+1)/4}}\mu_n(1)
\end{equation}
Similarly, we see that the sequence of polynomials $
(z^{n/4} P_{n} (x z^{-1/4} ; z))_{n\geq0}
$
is an SMOP with respect to the linear functional ${\un}_{1}$. In other words, \begin{equation}\label{zerosrelation}
    z^{n/4}  P_{n} (x z^{-1/4} ; z)= P_{n} (x; 1).
\end{equation}
Indeed, a relation can be  established  between the coefficients of the corresponding TTRR when a dilation in the variable is introduced; see \cite[Chapter 1, \S 4.1]{Chi}.

In this section, we will study the moments $(\mu_n(z))_{n\geqslant 0}$ and the coefficients $a_n, b_n$ of the TTRR as functions of $z$.

The first direct result appears to take derivative in \eqref{momenttransf} with respect to the variable $z$
$$
4z\partial_z\mu_n(z)=-(n+1)\mu_n(z), \quad n\geqslant 0.
$$
If we define $\sigma_n(z)$ by
\begin{equation}\label{sigma}
P_n(x;z)=x^n-\sigma_n(z)x^{n-1}+\mathcal{O}(x^{n-2}),
\end{equation}
then  from the TTRR \eqref{ttrrr}  it follows that
$$
b_n=\sigma_{n+1}(z)-\sigma_n(z).
$$
Before studying the asymptotic behavior of the coefficients $a_n$ and $b_n$ in the TTRR, we need to study some of their properties in terms of the variable $z$.\\
Note that if in \eqref{rev} we wrote $\sigma_n(z)=-\lambda_{n,n-1},$ then
$$
nb_{n-1}+\lambda_{n,n-1}=(n-1)b_{n-1}-\sigma_{n-1}(z).
$$
With this in mind we get
\begin{teo}
    Let $h_n = \langle \un_z, P_n^2\rangle$, $a_n,b_n$ be defined in \eqref{ttrrr}, and $\sigma_n(z)$ be defined by \eqref{sigma}. We have
    $$
    4z\partial_z\sigma_n(z)=-\sigma_n(z),
    $$
    and
    \begin{equation*}
    4z\partial_z  h_n = -(2n+1)h_n.
    \end{equation*}
\end{teo}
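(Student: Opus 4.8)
The plan is to derive each of the two identities by differentiating a suitable orthogonality-based representation with respect to $z$ and using the moment relation $4z\partial_z\mu_n(z)=-(n+1)\mu_n(z)$ already established above. The key observation is that $\sigma_n(z)$ and $h_n(z)$ both admit Hankel-determinant (or Cramer-type) expressions in terms of the moments $\mu_k(z)$, and the moments all scale the same way under differentiation, namely $\partial_z\mu_k=-\tfrac{k+1}{4z}\mu_k$. So differentiating these determinant expressions amounts to applying a simple weighting to each row/column, and the scaling collapses to a single overall factor.

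\textbf{Step 1 (the coefficient $\sigma_n$).} Recall $\sigma_n(z)=-\lambda_{n,n-1}$, the (negative of the) subleading coefficient of $P_n$. Writing the monic orthogonal polynomial as a ratio of Hankel determinants, $P_n(x;z)=\frac{1}{\Delta_{n-1}}\det(\text{moment matrix with last row }1,x,\dots,x^n)$, one reads off $\sigma_n(z)$ as a ratio of two Hankel-type determinants built from the $\mu_k(z)$. Under $\partial_z$, each row indexed by the power $x^i$ contributes an overall factor proportional to its power weights; tracking the bookkeeping, the scaling $\mu_k\mapsto -\tfrac{k+1}{4z}\mu_k$ forces $\partial_z\sigma_n(z)=c\,\sigma_n(z)/z$ for an explicit rational constant $c$, and comparing the homogeneity degrees (or simply using the closed form $\sigma_n(z)=z^{-1/4}\sigma_n(1)$ implied by \eqref{zerosrelation}, since the coefficient of $x^{n-1}$ in $z^{n/4}P_n(xz^{-1/4};z)$ is $z^{(n-1)/4}\cdot(-\sigma_n(z))\cdot\! \big(\!$ wait, more carefully $\big)$ $-z^{-1/4}\sigma_n(z)$) one gets $c=-1/4$, i.e. $4z\partial_z\sigma_n(z)=-\sigma_n(z)$. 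In fact the cleanest route is simply to differentiate \eqref{zerosrelation}: matching coefficients of $x^{n-1}$ in $z^{n/4}P_n(xz^{-1/4};z)=P_n(x;1)$ gives $z^{(n-1)/4}\sigma_n(z)=\sigma_n(1)$, hence $\sigma_n(z)=z^{-1/4}\sigma_n(1)$, and differentiating yields the claim immediately.

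\textbf{Step 2 (the norm $h_n$).} Here $h_n(z)=\Delta_n/\Delta_{n-1}$ where $\Delta_n=\det(\mu_{i+j}(z))_{i,j=0}^{n}$ is the Hankel determinant. From \eqref{momenttransf}, $\mu_k(z)=z^{-(k+1)/4}\mu_k(1)$, so factoring $z^{-(i+1)/4}$ out of row $i$ and (re-grouping) one finds $\Delta_n(z)=z^{-\alpha_n}\Delta_n(1)$ with $\alpha_n=\sum_{i=0}^{n}\frac{2i+1}{4}=\frac{(n+1)^2}{4}$ (using $\mu_{i+j}$ has homogeneity $-(i+j+1)/4$, distributed symmetrically, one checks $\alpha_n=\frac14\sum_{i=0}^n(2i+1)=\frac{(n+1)^2}{4}$). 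Therefore $h_n(z)=\Delta_n(z)/\Delta_{n-1}(z)=z^{-(\alpha_n-\alpha_{n-1})}h_n(1)=z^{-(2n+1)/4}h_n(1)$, and differentiating gives $4z\partial_z h_n(z)=-(2n+1)h_n(z)$. Alternatively, avoiding determinants entirely: differentiate $h_n=\langle\un_z,P_n^2\rangle$ directly, using $\partial_z\langle\un_z,P_n^2\rangle=\langle\partial_z\un_z,P_n^2\rangle+2\langle\un_z,P_n\partial_zP_n\rangle$; the second term vanishes by orthogonality since $\partial_zP_n$ has degree $\le n-1$, and $\langle\partial_z\un_z,x^k\rangle=\partial_z\mu_k(z)=-\tfrac{k+1}{4z}\mu_k(z)$, so expanding $P_n^2=\sum_k c_k x^k$ and using $\sum_k c_k\mu_k=h_n$ together with the fact that the top contribution at $x^{2n}$ carries weight $-(2n+1)/(4z)$ while cross terms telescope against orthogonality, one recovers $4z\partial_z h_n=-(2n+1)h_n$.

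\textbf{Main obstacle.} The only delicacy is making the ``homogeneity/scaling collapses to one factor'' argument fully rigorous rather than heuristic — i.e. justifying that after differentiating a ratio of determinants (or a sum over a basis expansion) all the per-term scalings genuinely combine into the single stated factor, with no leftover terms. The scaling-argument via \eqref{momenttransf} and \eqref{zerosrelation} handles this cleanly: because $P_n(\cdot\,;z)$ is itself determined by the moments, the dilation relation \eqref{zerosrelation} already encodes the exact homogeneity of every $z$-dependent quantity, so both identities reduce to reading off an exponent and differentiating a pure power of $z$. I would therefore present the proof in that order: first record $\sigma_n(z)=z^{-1/4}\sigma_n(1)$ and $h_n(z)=z^{-(2n+1)/4}h_n(1)$ from \eqref{momenttransf}–\eqref{zerosrelation}, then differentiate.
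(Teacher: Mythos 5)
Your proposal is correct, and in fact the paper states this theorem without giving any proof, so there is nothing to clash with: the scaling argument you use is exactly the one suggested by the surrounding text, since \eqref{momenttransf} and \eqref{zerosrelation} are recorded immediately before the statement and the subsequent corollary \eqref{eq:diffab} is just the same homogeneity read off for $a_n,b_n$. Your main route is sound: matching coefficients in \eqref{zerosrelation} gives $\sigma_n(z)=z^{-1/4}\sigma_n(1)$, and the Hankel computation $\Delta_n(z)=z^{-(n+1)^2/4}\Delta_n(1)$ with $h_n=\Delta_n/\Delta_{n-1}$ gives $h_n(z)=z^{-(2n+1)/4}h_n(1)$; differentiating these pure powers of $z$ yields both identities (equivalently, one can substitute $x=tz^{-1/4}$ directly in $h_n(z)=\int_0^\infty P_n^2(x;z)e^{-zx^4}\,dx$ and use \eqref{zerosrelation}). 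Two small repairs: (i) the intermediate bookkeeping for $\sigma_n$ is off — the coefficient of $x^{n-1}$ in $z^{n/4}P_n(xz^{-1/4};z)$ is $-z^{1/4}\sigma_n(z)$, so the matching reads $z^{1/4}\sigma_n(z)=\sigma_n(1)$, not $z^{(n-1)/4}\sigma_n(z)=\sigma_n(1)$; your final relation $\sigma_n(z)=z^{-1/4}\sigma_n(1)$ is nevertheless the correct one. (ii) The ``alternative'' direct differentiation of $h_n$ is not complete as sketched: after killing the $\partial_z P_n$ term by orthogonality one gets $4z\partial_z h_n=-4z\langle\un_z,x^4P_n^2\rangle$, and the claim that ``cross terms telescope'' really amounts to the identity $4z\langle\un_z,x^4P_n^2\rangle=(2n+1)h_n$, which follows from the Pearson equation via \eqref{1eq} (it is the content behind \eqref{LFeq1}); this should be cited rather than waved at. Since your primary scaling argument is self-contained and rigorous, these do not affect the validity of the proof.
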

\begin{coro}
    The coefficients of the TTRR \eqref{ttrrr} satisfy
    \begin{equation}\label{eq:diffab}
    a_n (z)= z^{-1/2} a_{n}(1),  \quad b_n (z)= z^{-1/4} b_{n}(1).
    \end{equation}
\end{coro}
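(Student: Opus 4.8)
The plan is to obtain the corollary as an immediate consequence of the scaling identities for $\sigma_n(z)$ and $h_n(z)$ furnished by the preceding theorem, together with the elementary relations linking $a_n$, $b_n$ to these quantities; I will also indicate a self-contained alternative route through the dilation identity \eqref{zerosrelation}.

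First I would treat $b_n$. Solving the separable first-order ODE $4z\,\partial_z\sigma_n(z)=-\sigma_n(z)$ gives $\sigma_n(z)=\sigma_n(1)\,z^{-1/4}$. Combining this with the relation $b_n=\sigma_{n+1}(z)-\sigma_n(z)$ recorded above (obtained by comparing leading coefficients in the TTRR \eqref{ttrrr}), one gets
\[
b_n(z)=z^{-1/4}\bigl(\sigma_{n+1}(1)-\sigma_n(1)\bigr)=z^{-1/4}\,b_n(1).
\]
For $a_n$ I would use the identity $a_n h_{n-1}=h_n$ from \eqref{coeffs}, together with the integrated form of $4z\,\partial_z h_n(z)=-(2n+1)h_n(z)$, namely $h_n(z)=h_n(1)\,z^{-(2n+1)/4}$. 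Then
\[
a_n(z)=\frac{h_n(z)}{h_{n-1}(z)}=\frac{h_n(1)}{h_{n-1}(1)}\,z^{-(2n+1)/4+(2n-1)/4}=a_n(1)\,z^{-1/2},
\]
since $-(2n+1)/4+(2n-1)/4=-1/2$.

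As an alternative that avoids invoking the theorem, one may argue directly from \eqref{zerosrelation}: substituting $x\mapsto xz^{-1/4}$ in the TTRR \eqref{ttrrr} for $(P_n(\cdot;z))_{n\geq0}$ and multiplying through by $z^{(n+1)/4}$, and then using \eqref{zerosrelation} for the indices $n-1$, $n$ and $n+1$, one recovers exactly the TTRR for $(P_n(\cdot;1))_{n\geq0}$ but with $z^{1/4}b_n(z)$ and $z^{1/2}a_n(z)$ in the roles of $b_n(1)$ and $a_n(1)$; comparing coefficients (the polynomials $P_{n+1},P_n,P_{n-1}$ being linearly independent) yields the two identities at once. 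There is essentially no obstacle in either approach; the only point demanding a little care is the bookkeeping of the powers of $z$ in the $a_n$ computation, and, if one follows the second route, noting that the substitution $x\mapsto xz^{-1/4}$ is legitimate because \eqref{zerosrelation} is a polynomial identity valid for all $x$.
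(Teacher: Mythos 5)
Your proposal is correct and follows exactly the route the paper intends: integrate the two first-order ODEs of the preceding theorem to get $\sigma_n(z)=z^{-1/4}\sigma_n(1)$ and $h_n(z)=z^{-(2n+1)/4}h_n(1)$, then use $b_n=\sigma_{n+1}(z)-\sigma_n(z)$ and $a_n=h_n/h_{n-1}$ from \eqref{coeffs}. Your alternative argument via the dilation identity \eqref{zerosrelation} is also sound and is precisely the remark the paper makes when citing \cite[Chapter 1, \S 4.1]{Chi}.
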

\section{Zeros and their electrostatic interpretation }\label{elect_Section}
Taking into account that $\un_z$ defined in \eqref{lfpd} is a positive definite linear functional, the zeros of $P_n(x,z)$ are real, simple and located in the interior of the convex hull of the support \cite{Chi,GMM21,Gabor}.
With this in mind, let $(x_{n,k}(z))_{k=1}^{n}$ be the zeros of $P_{n}(x, z)$ in increasing order, that is,
\begin{equation}\label{zerosP_n}
P_{n}(x_{n,k}(z),z)=0
\end{equation}
with
$$x_{n,1}(z)<x_{n,2}(z)<\cdots<x_{n,n}(z).$$

From Equation \eqref{zerosrelation}, a first direct result regarding the zeros can be established (see also \cite{Ismail-Wen11}).
\begin{coro}\label{7.1}
Let $(x_{n,k}(z))_{k=1}^n$ be the zeros of $P_{n}(x,z)$. Then $ x_{n, k}(z)=  z^{-1/4} x_{n,k}(1),$ for every positive real number $z.$
\end{coro}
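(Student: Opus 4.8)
The plan is to derive the identity directly from the scaling relation \eqref{zerosrelation} established in the previous section, which states that $z^{n/4}P_n(xz^{-1/4};z)=P_n(x;1)$. The key observation is that this is an identity of polynomials in $x$, so it can be evaluated at any point, in particular at points related to the zeros of either side.

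First I would recall that by \eqref{zerosrelation} we have $P_n(x;1)=z^{n/4}P_n(xz^{-1/4};z)$ for all $x$. Fix a positive real number $z$ and let $x_{n,k}(1)$ be a zero of $P_n(\cdot;1)$. Substituting $x=x_{n,k}(1)$ into \eqref{zerosrelation} gives
\begin{equation*}
0=P_n(x_{n,k}(1);1)=z^{n/4}P_n\bigl(x_{n,k}(1)z^{-1/4};z\bigr),
\end{equation*}
and since $z^{n/4}\neq 0$, this shows that $x_{n,k}(1)z^{-1/4}$ is a zero of $P_n(\cdot;z)$. Because $\un_z$ is positive definite, $P_n(\cdot;z)$ has exactly $n$ simple real zeros, and the map $x\mapsto xz^{-1/4}$ is an increasing bijection of $\mathbb{R}$; hence it carries the increasing list of zeros of $P_n(\cdot;1)$ to the increasing list of zeros of $P_n(\cdot;z)$. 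Matching the $k$-th smallest zero on each side yields $x_{n,k}(z)=z^{-1/4}x_{n,k}(1)$, which is the claim.

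There is essentially no obstacle here: the only point requiring a small amount of care is the bookkeeping that the substitution indeed produces \emph{all} $n$ zeros of $P_n(\cdot;z)$ (not just some of them) and that the ordering is preserved, which follows from the bijectivity and monotonicity of the dilation $x\mapsto xz^{-1/4}$ together with simplicity of the zeros. One could alternatively phrase the argument by starting from a zero $x_{n,k}(z)$ of $P_n(\cdot;z)$ and substituting $x=z^{1/4}x_{n,k}(z)$ into \eqref{zerosrelation}, obtaining $P_n(z^{1/4}x_{n,k}(z);1)=0$ and hence $z^{1/4}x_{n,k}(z)=x_{n,j}(1)$ for some $j$; monotonicity again forces $j=k$. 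Either direction gives the stated relation, and the proof is short.
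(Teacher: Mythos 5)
Your argument is correct and is exactly the route the paper takes: Corollary \ref{7.1} is stated as a direct consequence of the scaling identity \eqref{zerosrelation}, and your substitution together with the monotone bijection $x\mapsto xz^{-1/4}$ (to match the ordered, simple zeros) supplies the details the paper leaves implicit. No gaps.
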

Due to the simple relation between the zeros of $P_{n}(x,z)$ and $P_{n}(x,1)$, hereafter we will study the properties about the zeros $(x_{n,k}(1))_{k=1}^n$.
\subsection{Asymptotic zero distribution}
To consider the asymptotic distribution of the zeros of the SMOP with respect to the linear functional \eqref{lfpd} as $n\to \infty$, we use an appropriate scaling and apply the property of regular variation described in \cite{KV99}.

To state our following result, we use the notation
$$
\lim_{n/N\to t}y_{n,N}=y
$$
to denote the property that in the doubly indexed sequence $y_{n,N}$ we have
$$\lim_{j\to \infty}y_{n_j,N_j}=y$$
whenever $n_j$ and $N_j$ are two sequences of positive integer numbers such that $N_j\to \infty$ and $n_j/N_j\to t$ as $j\to \infty$.


\begin{pro}\label{pro7.2}
Let $(P_n)_{n\geq 0}$ be the SMOP with respect to \eqref{lfpd} with $z=1$. If we assume that $n$ and $N$ tend to infinity in such a way that the ratio  $n/N\to t$, then for the sequence of scaled monic orthogonal polynomials $P_{n,N}(x)=N^{-n/4}P_n(N^{1/4}x)$
 the asymptotic zero distribution as $n\to \infty$ has density
$$
\omega(x,t)=\dfrac{4}{7\,\pi}\dfrac{ x^{-1/2}}{t^{1/8}\,c^{1/2}}\,\pFq{2}{1}\left(1/2,-7/2;-5/2;\dfrac{x}{4c\,t^{1/4}}\right),\quad c=\dfrac{1}{\sqrt[4]{140}},
$$
defined on the interval $(0, 4\,c\,t^{1/4})$.
\end{pro}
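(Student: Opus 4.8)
The plan is to derive the asymptotic zero distribution from the asymptotics of the recurrence coefficients already established in \eqref{asintoticaba}, via the machinery of regularly varying recurrence coefficients of Kuijlaars and Van Assche \cite{KV99}. First I would record that for $z=1$ we have, by \eqref{asintoticaba}, $a_n\sim\sqrt{n/140}$ and $b_n\sim 2\sqrt[4]{n/140}$, so that $a_n = n^{1/2}\,\tilde a_n$ and $b_n = n^{1/4}\,\tilde b_n$ with $\tilde a_n\to c^2$ and $\tilde b_n\to 2c$, where $c=(140)^{-1/4}$. These are regularly varying sequences with indices $1/2$ and $1/4$ respectively. After the scaling $P_{n,N}(x)=N^{-n/4}P_n(N^{1/4}x)$, the polynomials $P_{n,N}$ obey a three-term recurrence with coefficients $a_n/N^{1/2}$ and $b_n/N^{1/4}$; when $n/N\to t$ these converge to $a(t)=c^2\,t^{1/2}$ and $b(t)=2c\,t^{1/4}$. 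By the theorem of \cite{KV99}, the scaled zero counting measures converge weakly to a measure whose density $\omega(x,t)$ is obtained from the function $t\mapsto (a(t),b(t))$ by an explicit integral transform.

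The core computation is then to evaluate that transform. Following \cite{KV99}, one has
\[
\omega(x,t)=\frac{1}{\pi}\int_{s_*(x)}^{t}\frac{ds}{\sqrt{4a(s)^2-(x-b(s))^2}},
\]
where $s_*(x)$ is the smallest $s$ for which $x$ lies in the interval $[\,b(s)-2a(s),\,b(s)+2a(s)\,]$. With $a(s)=c^2 s^{1/2}$ and $b(s)=2c s^{1/4}$, note that $b(s)-2a(s)=2cs^{1/4}-2c^2 s^{1/2}=2c s^{1/4}(1-c s^{1/4})\ge 0$ for $s$ in the relevant range and $b(s)+2a(s)=2cs^{1/4}(1+cs^{1/4})$, so the support of $\omega(\cdot,t)$ is $(0, 2ct^{1/4}(1+ct^{1/4}))$; I would check that this matches $(0,4ct^{1/4})$ in the regime where the paper's stated interval is intended (i.e.\ after absorbing the lower-order correction, or by a more careful identification of the edge — this is a point to verify). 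The substitution $s=t\,u^4$ (so $s^{1/4}=t^{1/4}u$) turns $4a(s)^2-(x-b(s))^2 = 4c^4 s - (x-2cs^{1/4})^2$ into a polynomial in $u$, and the integral becomes an algebraic integral in $u$ over an interval determined by $x$. Expanding the integrand and integrating term by term produces a ${}_2F_1$; matching the Gauss hypergeometric series $\pFq{2}{1}(1/2,-7/2;-5/2;\,x/(4ct^{1/4}))$ and the prefactor $\frac{4}{7\pi}x^{-1/2}t^{-1/8}c^{-1/2}$ is then a matter of the standard Euler-type integral representation of ${}_2F_1$.

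The main obstacle I expect is twofold: first, correctly identifying $s_*(x)$ and hence the lower limit of integration, because $b(s)-2a(s)$ is not monotone increasing in $s$ over all of $(0,\infty)$ — it increases then decreases — so one must argue that for $x$ in the stated support the relevant branch is the increasing one, which pins down $s_*(x)$ as the unique solution of $x=b(s)-2a(s)$ on that branch; and second, the bookkeeping in the substitution and series expansion that recasts the elementary algebraic integral as the specific ${}_2F_1$ with parameters $(1/2,-7/2;-5/2)$ — in particular getting the argument $x/(4ct^{1/4})$ and the constant $4/(7\pi)$ exactly right. A clean way to handle the second part is to use the Euler integral $\pFq{2}{1}(a,b;c;\zeta)=\frac{\Gamma(c)}{\Gamma(b)\Gamma(c-b)}\int_0^1 v^{b-1}(1-v)^{c-b-1}(1-\zeta v)^{-a}\,dv$ with $b=-7/2$, $c=-5/2$ interpreted via analytic continuation (or, more safely, first establish the identity for parameters where the integral converges and then continue in the parameters), and to match this against the algebraic integral after the substitution; once the integrand is in the form $v^{b-1}(1-v)^{c-b-1}(1-\zeta v)^{-1/2}$ the identification is forced. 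The normalization $\int_0^{4ct^{1/4}}\omega(x,t)\,dx=1$ (equivalently the total mass of the limiting measure is $t$ before normalizing by $n$, i.e.\ $1$ after dividing the zero counting measure by $n$) gives an independent check on the constant.
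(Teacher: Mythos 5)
Your overall strategy coincides with the paper's: feed the coefficient asymptotics \eqref{asintoticaba} into the regular-variation theorem of \cite{KV99} and evaluate the resulting average of arcsine densities to recognize the $\pFq{2}{1}$. However, there is a genuine error in how you set up that integral transform. The result in \cite{KV99} is formulated for orthonormal-type recurrence coefficients, so when $a_n,b_n$ are the monic coefficients of \eqref{ttrrr} the interval carrying the equilibrium measure at ``time'' $s$ is $[\,b(s)-2\sqrt{a(s)},\,b(s)+2\sqrt{a(s)}\,]$, and the relevant doubly indexed sequences are $b_{n,N}=b_nN^{-1/4}\to 2c\,s^{1/4}$ and $\sqrt{a_{n,N}}=\sqrt{a_n}\,N^{-1/4}\to c\,s^{1/4}$. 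You instead inserted the limit of the monic coefficient, $a_{n,N}\to c^2 s^{1/2}$, into the orthonormal formulas $[\,b(s)-2a(s),\,b(s)+2a(s)\,]$ and $\sqrt{4a(s)^2-(x-b(s))^2}$. With the correct identification the lower edge is $2cs^{1/4}-2cs^{1/4}\equiv 0$ and the upper edge is $4cs^{1/4}$, so the support is exactly $(0,4c\,t^{1/4})$: the discrepancy you flagged with the stated interval is not a lower-order effect to be absorbed, it is the symptom of this mix-up; there is no non-monotone lower edge to analyze, and $s_*(x)=(x/(4c))^4$ is forced by the upper edge alone. Carried out as written, with integrand $\bigl(4c^4s-(x-2cs^{1/4})^2\bigr)^{-1/2}$, your computation would not reproduce the stated density. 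A second slip: the limiting measure in \cite{KV99} is the average $\frac{1}{t}\int_0^t\omega_{[0,\,4cs^{1/4}]}\,ds$, so a factor $\frac{1}{t}$ is missing from your transform; without it the density has total mass $t$ and the constant $\frac{4}{7\pi}$ cannot emerge.

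Once these two points are corrected, the rest of your plan is essentially the paper's proof: one has $\omega(x,t)=\frac{1}{\pi t}\int_{(x/(4c))^4}^{t}\bigl(x\,(4cs^{1/4}-x)\bigr)^{-1/2}ds$, and the substitution $y=s^{1/4}$ followed by the binomial expansion of $\left(1-\frac{x}{4cy}\right)^{-1/2}$ and termwise integration, together with the identity $\frac{1}{7-2k}=\frac{1}{7}\,\frac{(-7/2)_k}{(-5/2)_k}$, yields the prefactor $\frac{4}{7\pi}\,x^{-1/2}t^{-1/8}c^{-1/2}$ and the series of $\pFq{2}{1}\left(1/2,-7/2;-5/2;\frac{x}{4c\,t^{1/4}}\right)$; your alternative identification via the Euler integral representation would serve equally well at that final stage, and the mass-one normalization is indeed a useful check on the constant.
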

\begin{proof}
The coefficients $b_{n,N}$, $n\geq 0$,  and $a_{n,N}$, $n\geq 1$, of the TTRR associated with the scaled monic orthogonal polynomials satisfy
$$b_{n,N}=\dfrac{b_{n}}{N^{1/4}},\quad \quad \sqrt{a_{n,N}}=\dfrac{\sqrt{a_n}}{N^{1/4}}.$$
Further, from \eqref{asintoticaba} we get
$$\lim_{n/N\to t}b_{n,N}=2\,c\,t^{1/4}\quad\text{and}\quad \lim_{n/N\to t}\sqrt{a_{n,N}}=c\,t^{1/4},\quad c=\dfrac{1}{\sqrt[4]{140}}.$$
The coefficients $b_{n,N}$ and $\sqrt{a_{n,N}}$ are said to be regularly varying at infinity with index $1/4$ (cf. \cite[Section 4.5]{KV99}) If we define $\alpha(t)=0$ and $\beta(t)=4c\,t^{1/4}, $ then using \cite[Theorem 1.4]{KV99}  and \cite[eq. (1.12)]{KV99}
the asymptotic zero distribution has as density
    \begin{align*}
  \omega(x,t)&=\dfrac{1}{\pi\,t}\int_0^t\dfrac{ds}{(4c\,s^{1/4}-x)^{1/2}\,x^{1/2}}=\dfrac{1}{2\pi\,t\,x^{1/2}\,c^{1/2}}\int_0^t s^{-1/8}\left(1-\dfrac{x}{4c\,s^{1/4}}\right)^{-1/2}\,ds.
    \end{align*}
Doing the change of variable $y=s^{1/4}$ we get
     \begin{align*}
  \omega(x,t)&=\dfrac{2}{\pi\,t\,x^{1/2}\,c^{1/2}}\int_0^{t^{1/4}} y^{5/2}\left(1-\dfrac{x}{4c\,y}\right)^{-1/2}dy\\
  &=\dfrac{2}{\pi\,t\,x^{1/2}\,c^{1/2}}\int_0^{t^{1/4}} y^{5/2}\sum_{k=0}^\infty\dfrac{(1/2)_k}{k!}\left(\dfrac{x}{4c\,y}\right)^kdy\\
  &=\dfrac{2}{\pi\,t\,x^{1/2}\,c^{1/2}}\sum_{k=0}^\infty\dfrac{(1/2)_k}{k!}\left(\dfrac{x}{4c}\right)^k\int_0^{t^{1/4}} y^{5/2-k}dy\\
  &=\dfrac{4}{\pi\,t^{1/8}\,x^{1/2}\,c^{1/2}}\sum_{k=0}^\infty\dfrac{(1/2)_k}{k!}\dfrac{1}{7-2k}\left(\dfrac{x}{4c\,t^{1/4}}\right)^k\\
  &=\dfrac{4}{7\,\pi\,t^{1/8}\,x^{1/2}\,c^{1/2}}\sum_{k=0}^\infty\dfrac{(1/2)_k}{k!}\dfrac{(-7/2)_k}{(-5/2)_k}\left(\dfrac{x}{4c\,t^{1/4}}\right)^k
  \end{align*}
  and the result follows.
\end{proof}
Figure \ref{di1} illustrates the limit distribution of zeros for different values of $t$ according to Proposition \ref{pro7.2}.
\begin{figure}[ht]
\includegraphics[width=\textwidth]{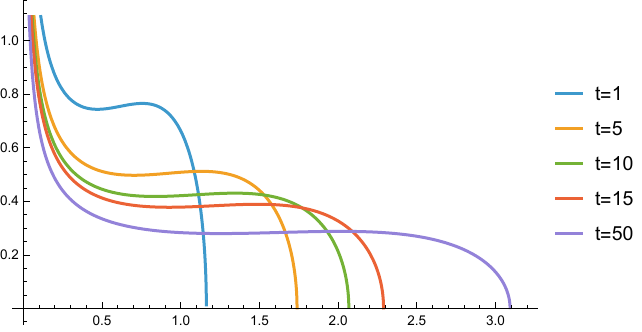}
  \caption{The limit distribution of zeros $\omega(x,t)$ for different values of $t$}
  \label{di1}
  \end{figure}
\begin{pro}
Let $(x_{n,k})_{k=1}^n$ be the zeros of $P_n(x)$ (with $z=1$) ordered as in \eqref{zerosP_n}.
For $n=2, 3,\ldots$, the largest zero, $x_{n,n}$,  satisfies
$$0<x_{n,n}<\max_{1\leq k\leq 2n-1}c_{2n}\gamma_{k},$$ where
$$\gamma_{2k+1}=-\dfrac{P_{k+1}(0)}{P_{k}(0)},\quad\quad \gamma_{2k}=-a_k\dfrac{P_{k-1}(0)}{P_{k}(0)},$$
and  $c_{2n}=4\cos^2\left(\dfrac{\pi}{2n+1}\right)+\varepsilon$, $\varepsilon>0.$
\end{pro}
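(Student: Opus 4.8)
The plan is to pass to the symmetrized orthogonal polynomials on the real line and then reduce the estimate to a comparison of two tridiagonal matrices. Working throughout with $z=1$, I would first note that the numbers $\gamma_m$ in the statement are precisely the recurrence coefficients of the symmetrized SMOP attached to $\un_1$ (the Freud functional with weight $|x|\,e^{-x^8}$ on $\mathbb{R}$; see \cite{Chi}). Evaluating the TTRR \eqref{ttrrr} at $x=0$ gives $P_{k+1}(0)+b_kP_k(0)+a_kP_{k-1}(0)=0$, and since $\un_1$ is positive definite all zeros of $P_k$ lie in $(0,\infty)$, whence $\operatorname{sgn}P_k(0)=(-1)^k$ and $P_k(0)\neq0$ for all $k$. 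From this one reads off both the positivity $\gamma_{2k-1},\gamma_{2k}>0$ and the two identities $\gamma_{2k-1}\gamma_{2k}=a_k$, $\gamma_{2k}+\gamma_{2k+1}=b_k$. Defining monic polynomials $(Q_m)_{m\ge0}$ by $Q_{-1}=0$, $Q_0=1$, $yQ_m=Q_{m+1}+\gamma_mQ_{m-1}$, a short two-step induction using exactly these identities yields $Q_{2k}(y)=P_k(y^2)$ for every $k$. Since $\gamma_m>0$, $(Q_m)$ is a (symmetric) SMOP, so the zeros of $Q_{2n}$ are real and simple; being $\pm\sqrt{x_{n,k}}$, $k=1,\dots,n$, the largest one is $\sqrt{x_{n,n}}$.

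Next I would invoke the standard fact (\cite{Chi}) that the zeros of $Q_{2n}$ are the eigenvalues of the $2n\times 2n$ real symmetric tridiagonal matrix $B$ with zero diagonal and off-diagonal entries $\sqrt{\gamma_1},\dots,\sqrt{\gamma_{2n-1}}$, so $\sqrt{x_{n,n}}$ is its largest eigenvalue. Put $\gamma^{\ast}=\max_{1\le k\le 2n-1}\gamma_k$ and let $C$ be the $2n\times 2n$ tridiagonal matrix with zero diagonal and all off-diagonal entries equal to $1$ (the path-graph adjacency matrix), whose largest eigenvalue is the classical value $2\cos\tfrac{\pi}{2n+1}$. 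For a unit vector $v$, writing $|v|$ for its entrywise absolute value, $v^{T}Bv=2\sum_{k=1}^{2n-1}\sqrt{\gamma_k}\,v_kv_{k+1}\le\sqrt{\gamma^{\ast}}\,|v|^{T}C\,|v|\le 2\sqrt{\gamma^{\ast}}\cos\tfrac{\pi}{2n+1}$, and taking the supremum over $v$ gives $\sqrt{x_{n,n}}\le 2\sqrt{\gamma^{\ast}}\cos\tfrac{\pi}{2n+1}$. Squaring and using $\varepsilon>0$,
\[
0<x_{n,n}\le 4\cos^{2}\!\Big(\tfrac{\pi}{2n+1}\Big)\gamma^{\ast}<\Big(4\cos^{2}\!\big(\tfrac{\pi}{2n+1}\big)+\varepsilon\Big)\gamma^{\ast}=c_{2n}\gamma^{\ast}=\max_{1\le k\le 2n-1}c_{2n}\gamma_k ,
\]
which is the asserted inequality; positivity of $x_{n,n}$ is immediate from positive definiteness of $\un_1$.

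I do not expect a genuine obstacle: the argument is essentially bookkeeping, and the delicate points are (a) the sign pattern $\operatorname{sgn}P_k(0)=(-1)^k$ and the resulting positivity of the $\gamma_m$, which is what makes $(Q_m)$ a bona fide SMOP; (b) carrying out the induction $Q_{2k}(y)=P_k(y^2)$ cleanly, i.e.\ correctly feeding in $\gamma_{2k-1}\gamma_{2k}=a_k$ and $\gamma_{2k}+\gamma_{2k+1}=b_k$ at each step; and (c) matching the dimension $2n$ of the truncated Jacobi matrix to the even-degree polynomial $Q_{2n}$, which is exactly what pins down the index $2n+1$ inside the cosine and the range $1\le k\le 2n-1$ of the maximum. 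The harmless parameter $\varepsilon>0$ is precisely what allows a strict inequality without having to check that the $\gamma_k$ are not all equal — the only configuration in which the matrix bound $\sqrt{x_{n,n}}\le 2\sqrt{\gamma^{\ast}}\cos\tfrac{\pi}{2n+1}$ could be attained.
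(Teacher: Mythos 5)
Your argument is correct, and it diverges from the paper's proof at the decisive step. The paper performs the same symmetrization, but by identification: it introduces the SMOP $(S_m)_{m\ge 0}$ of the symmetric measure $|x|e^{-x^8}dx$, invokes the symmetrization process of \cite[Chapter 6]{GMM21} to get $S_{2n}(y)=P_n(y^2)$ together with the stated formulas for the $\gamma_m$, and then the whole inequality is imported wholesale from \cite[Theorem 6.3]{Ana}, which bounds the largest positive zero $y_{2n,n}=\sqrt{x_{n,n}}$ by $\max_{1\le k\le 2n-1}\sqrt{c_{2n}\gamma_k}$. You instead verify the symmetrization by hand — the identities $\gamma_{2k-1}\gamma_{2k}=a_k$ and $\gamma_{2k}+\gamma_{2k+1}=b_k$, and the positivity of the $\gamma_m$, follow as you say from \eqref{ttrrr} at $x=0$ and the sign pattern $\operatorname{sgn}P_k(0)=(-1)^k$, and the two-step induction $Q_{2k+2}=(y^2-\gamma_{2k}-\gamma_{2k+1})Q_{2k}-\gamma_{2k-1}\gamma_{2k}Q_{2k-2}$ gives $Q_{2k}(y)=P_k(y^2)$ — and then you replace the external citation by a self-contained linear-algebra proof of the key bound: the largest zero of $Q_{2n}$ is the top eigenvalue of the $2n\times 2n$ symmetric tridiagonal matrix with zero diagonal and off-diagonal entries $\sqrt{\gamma_k}$ (legitimate since $\gamma_k>0$), and a Rayleigh-quotient comparison with the path-graph adjacency matrix, whose top eigenvalue is $2\cos\frac{\pi}{2n+1}$, yields $\sqrt{x_{n,n}}\le 2\sqrt{\gamma^\ast}\cos\frac{\pi}{2n+1}$, from which the stated strict inequality follows after squaring and using $\varepsilon>0$. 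What the paper's route buys is brevity and a pointer to the sharper and more general results available for higher-order Freud weights; what yours buys is a short, fully self-contained proof of exactly the inequality the paper cites, which also makes transparent where the index $2n+1$ in the cosine and the range $1\le k\le 2n-1$ come from, and why the slack $\varepsilon$ is what guarantees strictness (equality in your estimate forces all $\gamma_k$ equal). I see no gap in your bookkeeping.
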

\begin{proof}
Let $(S_n)_{n\geq 0}$ be the SMOP with respect to the measure $$d\mu=|x|\,e^{-x^8}dx$$
supported on $\mathbb{R}$. Since $d\mu$ is a symmetric measure, then  $(S_n)_{n\geq 0}$ satisfies a TTRR  $$xS_n(x)=S_{n+1}(x)+\gamma_{n}S_{n-1}(x).$$
Using the symmetrization process \cite[Chapter 6]{GMM21} we get that $S_{2n}(x)=P_n(x^2)$ as well as
$$\gamma_{2n+1}=-\dfrac{P_{n+1}(0)}{P_{n}(0)},\quad\quad \gamma_{2n}=-a_n\dfrac{P_{n-1}(0)}{P_{n}(0)}.$$
Let $(y_{2n,k})_{k=1}^n$ be the positive zeros   of $S_{2n}(x)$  in an increasing order, i.e.,  $y_{2n,1}<\cdots <y_{2n,n}$. Since $x_{n,k}=y_{2n,k}^2$, $k=1,\ldots, n,$ are the zeros of $P_n(x)$, then from \cite[Theorem 6.3]{Ana}, we have
$$0<\sqrt{x_{n,n}}=y_{2n,n}<\max_{1\leq k\leq 2n-1}\sqrt{c_{2n}\gamma_{k}}$$
and the result follows.
\end{proof}
\subsection{Electrostatic interpretation} Let us consider the ladder operators introduced in \cite{Chen97}, which provide a more straightforward approach to obtaining the electrostatic interpretation of the zeros of orthogonal polynomials.

Assuming that the linear functional is associated with a weight function and under suitable integrability conditions, one can derive both raising and lowering operators for orthogonal polynomials with respect to a weight function supported on the real line. Additionally, a second-order linear differential equation satisfied by these polynomials can be obtained. Notably, \cite{Chen05} also explores the case of orthogonal polynomials associated with discontinuous weights.

In the specific case of the linear functional $\un_z$, whose associated monic orthogonal polynomials are denoted by $(P_n)_{n \geqslant 0}$, the lowering and raising operators are given by
\begin{align*}
    \left(\frac{d}{dx}+{\mathcal{B}}_n(x)\right)P_n(x)=a_n\,{\mathcal{A}}_n(x)P_{n-1}(x),& \quad n\geqslant 1,\\[5pt]
    -\left(\frac{d}{dx}-{\mathcal{B}}_n(x)-v'(x)\right)P_{n-1}(x)={\mathcal{A}}_{n-1}(x)P_{n}(x),& \quad n\geqslant 1,
\end{align*}
 where $\An_n(x)$ and $\Bn_n(x)$ are given in \eqref{eq:A} and \eqref{eq:B}, respectively,  $v(x)=zx^4$ and  $h_n=\prodint{\un_z,P_n^2}$. Moreover, the combination of the raising and lowering operators leads to the second-order linear differential equation
\begin{equation}\label{eq:diffeq2}
P''_n(x)+S(x;n)P_n'(x)+Q(x;n)P_n(x)=0, \quad n\geqslant 1,
\end{equation}
where
\begin{align*}
    S(x:n)=-v'(x)-\frac{d}{dx}\ln {\An}_n(x),
\end{align*}
and
$$
Q(x;n)=-{\Bn}'_n(x)-{\Bn}_n(x)\frac{d}{dx}\ln {\An}_n(x)-{\Bn}_n(x)[v'(x)+{\Bn}_n(x)]+\frac{h_{n-1}}{h_n}{\An}_n(x){\An}_{n-1}(x).
$$
Let us denote the zeros of \(P_n(x)\) by \((x_{n,k})_{k=1}^n\) in increasing order, that is,
\[
P_n(x_{n,k}) = 0, \quad 1 \leqslant k \leqslant n, \quad \text{with} \quad x_{n,1} < x_{n,2} < \cdots < x_{n,n}.
\]

Evaluating \eqref{eq:diffeq2} at \(x = x_{n,k}\), we obtain:
\begin{equation}\label{eq:diffeqxnk}
\frac{P_n''(x_{n,k})}{P_n'(x_{n,k})} = -S_n(x_{n,k}) = 4z x_{n,k}^3 + \left.\frac{d}{dx}\ln {\An}_{n}(x)\right|_{x = x_{n,k}}, \quad n \geqslant 1.
\end{equation}

Next, observe that
\[
\ln {\An}_{n}(x) = \ln \frac{4h_n z x (x^2 + b_n x + a_{n+1} + b_n^2 + a_n) + P_n^2(0)}{h_n x}.
\]

Therefore
\begin{equation}\label{eq:derivlnA}
\frac{d}{dx}\ln {\An}_{n}(x) = \frac{3x^2 + 2b_n x + a_{n+1} + b_n^2 + a_n}{x(x^2 + b_n x + a_{n+1} + b_n^2 + a_n) + \dfrac{P_n^2(0)}{4z h_n}} - \frac{1}{x}.
\end{equation}

\begin{teo}
The zeros of \(P_n(x)\) can be interpreted as the equilibrium positions of \(n\) unit-charged particles distributed along the interval \((0, \infty)\) under the influence of the potential
\begin{equation}\label{eq:potential}
V_n(x) = z x^4 + \ln\left|x(x^2 + b_n x + a_{n+1} + b_n^2 + a_n) + \dfrac{P_n^2(0)}{4z h_n}\right|- \ln|x| .
\end{equation}
\end{teo}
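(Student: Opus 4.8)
The plan is to derive the electrostatic model from the second-order holonomic differential equation \eqref{eq:diffeq2} by the standard argument that converts the ratio $P_n''(x_{n,k})/P_n'(x_{n,k})$ into a force-balance condition at each zero. First I would recall that since the zeros $(x_{n,k})_{k=1}^n$ of $P_n$ are real and simple, we may expand the logarithmic derivative of $P_n$ in partial fractions: differentiating $\ln P_n(x)=\sum_{j=1}^n\ln(x-x_{n,j})$ gives $P_n'(x)/P_n(x)=\sum_{j}1/(x-x_{n,j})$, and a second differentiation together with a limit computation (or the well-known identity $P_n''(x_{n,k})/P_n'(x_{n,k})=2\sum_{j\ne k}1/(x_{n,k}-x_{n,j})$) produces the mutual-repulsion term. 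This identifies $\sum_{j\ne k}(x_{n,k}-x_{n,j})^{-1}$ with $\tfrac12$ times the right-hand side of \eqref{eq:diffeqxnk}.

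Next I would interpret the right-hand side of \eqref{eq:diffeqxnk} as an external field. Using \eqref{eq:derivlnA}, the quantity $4zx_{n,k}^3+\frac{d}{dx}\ln\An_n(x)\big|_{x=x_{n,k}}$ is precisely $-\frac{d}{dx}$ of the function
\[
-zx^4-\ln\left|x(x^2+b_nx+a_{n+1}+b_n^2+a_n)+\frac{P_n^2(0)}{4zh_n}\right|+\ln|x|,
\]
evaluated at $x=x_{n,k}$; that is, it equals $-V_n'(x_{n,k})$ with $V_n$ as in \eqref{eq:potential}, since $v(x)=zx^4$ contributes the $4zx^3$ term and the logarithmic pieces account for the $\frac{d}{dx}\ln\An_n(x)$ term after using the explicit form $\ln\An_n(x)=\ln\frac{4h_nzx(x^2+b_nx+a_{n+1}+b_n^2+a_n)+P_n^2(0)}{h_nx}$ displayed just before \eqref{eq:derivlnA}. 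Thus \eqref{eq:diffeqxnk} becomes the equilibrium condition
\[
\sum_{j\ne k}\frac{2}{x_{n,k}-x_{n,j}}-V_n'(x_{n,k})=0,\qquad 1\le k\le n,
\]
which is exactly the vanishing of the gradient of the total energy $E(x_1,\dots,x_n)=\sum_{j<\ell}\ln|x_j-x_\ell|^{-2}+\sum_j V_n(x_j)$ (up to the overall factor and sign conventions), i.e.\ the zeros form a critical configuration for $n$ unit charges on $(0,\infty)$ interacting logarithmically and subject to the confining potential $V_n$.

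The main obstacle, and the only genuinely delicate point, is the bookkeeping of signs and constant factors: one must be careful that the coefficient $2$ in the repulsion term matches the normalization chosen for $V_n$ (here each pairwise interaction is $\ln|x_j-x_\ell|^{-2}$ rather than $\ln|x_j-x_\ell|^{-1}$, so that the factor $2$ is absorbed and $V_n$ appears with weight $1$ per charge), and that the $\ln|x|$ subtraction in \eqref{eq:potential} correctly cancels the $-1/x$ term appearing in \eqref{eq:derivlnA}. I would also note explicitly that the displayed potential is well defined on $(0,\infty)$: the cubic $x(x^2+b_nx+a_{n+1}+b_n^2+a_n)+P_n^2(0)/(4zh_n)$ is, up to the constant $4zh_n$, the denominator of $\An_n$, which is $x\An_n(x)h_n$; since $\An_n(x)>0$ for $x>0$ (being $h_n^{-1}$ times an integral of a nonnegative integrand plus a positive term, as in \eqref{eq:A}), this cubic does not vanish on $(0,\infty)$, so $V_n$ is finite and smooth there except for the integrable logarithmic singularity at $x=0$ coming from $-\ln|x|$, which merely reflects that no charge is placed at the origin. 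Finally I would remark that, as in the classical theory, this critical point is in fact the unique minimum of $E$ on the open simplex $\{0<x_1<\cdots<x_n\}$ because $E\to+\infty$ on the boundary and at infinity and $E$ is strictly convex after a standard change of variables, so the zeros are the genuine equilibrium positions, completing the proof.
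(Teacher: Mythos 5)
Your argument is essentially the paper's own proof: the identity $P_n''(x_{n,k})/P_n'(x_{n,k})=\sum_{j\ne k}2/(x_{n,k}-x_{n,j})$ combined with \eqref{eq:diffeqxnk} and \eqref{eq:derivlnA} turns the holonomic equation into $\partial E_n/\partial x_{n,k}=0$ for exactly the energy $E_n=-2\sum_{j<k}\ln|x_{n,k}-x_{n,j}|+\sum_k V_n(x_{n,k})$, which is what the paper does. Two minor points: your intermediate sentence asserting that $4zx_{n,k}^3+\frac{d}{dx}\ln\mathcal{A}_n(x)\big|_{x=x_{n,k}}$ equals $-V_n'(x_{n,k})$ has the wrong sign (it equals $+V_n'(x_{n,k})$, which is in fact what your final displayed equilibrium condition uses, so the conclusion stands), and your closing claim that this critical configuration is the unique global minimum of $E_n$ via strict convexity is not justified by the argument given (the logarithmic term in $V_n$ is not obviously convex) --- but it is also not needed, since the theorem only asserts that the zeros are equilibrium (critical) positions.
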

\begin{proof}
    If we write $P_n(x)=(x-x_{n,1})\cdots(x-x_{n,n})$, then according to \cite[Ch. 10]{GMM21},
    $$
    \frac{P_n''(x_{n,k})}{P_n'(x_{n,k})} =\sum_{\stackrel{j=1}{j\ne k}}^n\frac{2}{x_{n,k}-x_{n,j}},
    $$
    and so, \eqref{eq:diffeqxnk} and \eqref{eq:derivlnA} yield
    $$
    \sum_{\stackrel{j=1}{j\ne k}}^n\frac{2}{x_{n,k}-x_{n,j}}-4zx_{n,k}^3-\frac{3x_{n,k}^2 + 2b_n x_{n,k} + a_{n+1} + b_n^2 + a_n}{x_{n,k}(x_{n,k}^2 + b_n x_{n,k} + a_{n+1} + b_n^2 + a_n) + \dfrac{P_n^2(0)}{4z h_n}} + \frac{1}{x_{n,k}}=0,
    $$
    or equivalently,
    $$
    \frac{\partial E_n}{\partial x_{n,k}}=0, \quad k=1,\ldots,n,
    $$
    where the total energy of the system, $E_n:=E_n(x_{n,1},\ldots,x_{n,n})$, is given by
    \begin{align*}
    E_n&=-2\sum_{1\leqslant j < k \leqslant n} \ln |x_{n,k}-x_{n,j}|\\
    &+\sum_{k=1}^n\left[zx^4_{n,k}+\ln\left|x_{n,k}(x_{n,k}^2 + b_n x_{n,k} + a_{n+1} + b_n^2 + a_n) + \dfrac{P_n^2(0)}{4z h_n}\right|-\ln|x_{n,k}|\right].
    \end{align*}
    Consequently, the external potential $V_n(x)$ is expressed as \eqref{eq:potential}.
\end{proof}

\subsection{Numerical experiments}
In this subsection, we present numerical experiments that illustrate the behavior of the zeros of the truncated Freud polynomials studied in this work. Our analysis focuses on the smallest and largest zeros of each polynomial \( P_n(x) \). In light of Proposition~\ref{7.1}, we fix the parameter \( z = 1 \). For \(n=1,\ldots,14\), the polynomials are constructed by Gram–Schmidt orthogonalization of the monomial basis, using the moments in~\eqref{moments}. Tables~\ref{tabla1} and~\ref{tabla2} report the smallest and largest zeros, respectively, of the polynomials \( P_n(x) \) for \( n = 1, \ldots, 14 \).

\vspace{10pt}
\begin{minipage}{0.48\textwidth}
\begin{equation*}
\renewcommand{\arraystretch}{1.1}\begin{array}{|c|c||c|c|}\hline
n&x_{n,1}&n&x_{n,1}    \\ \hline\hline
1&0.4889&8&0.0302\\
2&0.2363&9&0.0249\\
3&0.1372&10&0.0209\\
4&0.0901&11&0.0178\\
5&0.0640&12&0.0154\\
6&0.0480&13&0.0135\\
7&0.0375&14&0.0115\\ \hline
\end{array}
\end{equation*}
\captionof{table}{Sequence  $x_{n,1}$, for $n=1,\ldots,14.$ }
\label{tabla1}
\end{minipage}
\hfill 
\begin{minipage}{0.48\textwidth}
\begin{equation*}
\renewcommand{\arraystretch}{1.1}\begin{array}{|c|c||c|c|}\hline
n&x_{n,n}&n&x_{n,n} \\ \hline\hline
1&0.4889&8&1.6804\\
2&0.8808&9&1.7522\\
3&1.1103&10&1.8174\\
4&1.2740&11&1.8771\\
5&1.4024&12&1.9323\\
6&1.5088&13&1.9843\\
7&1.6002&14&2.0393\\
\hline
\end{array}
\end{equation*}
\captionof{table}{Sequence  $x_{n,n}$, for $n=1,\ldots,14.$ }
\label{tabla2}
\end{minipage}

It is clear, that since the linear functional is positive definite and supported on the interval \( (0, \infty) \), we have
\[
\lim_{n\to \infty} x_{n,1} = 0, \quad \lim_{n\to \infty} x_{n,n} = \infty.
\]
However, our interest is to analyze the rate of convergence.  Figure~\ref{Diagrama1} displays a monotonic trend of the smallest and largest zeros of the orthogonal polynomials for low values of \( n \).
\begin{figure}[ht]
\begin{subfigure}{.5\textwidth}
\includegraphics[width=\textwidth]{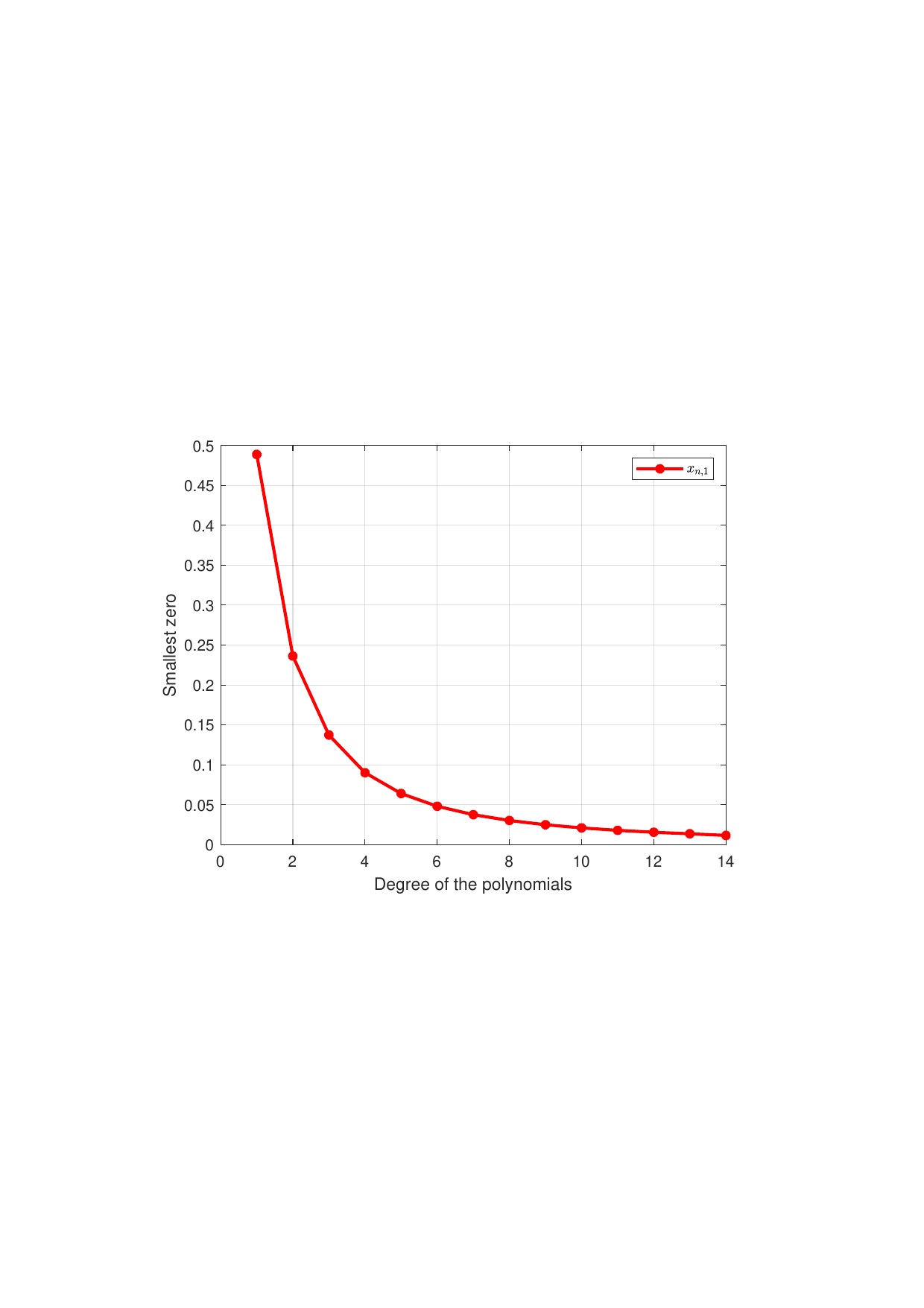}
  \caption{Smallest zeros, $n=1,\ldots, 14.$}
\end{subfigure}\begin{subfigure}{.5\textwidth}
\includegraphics[width=\textwidth]{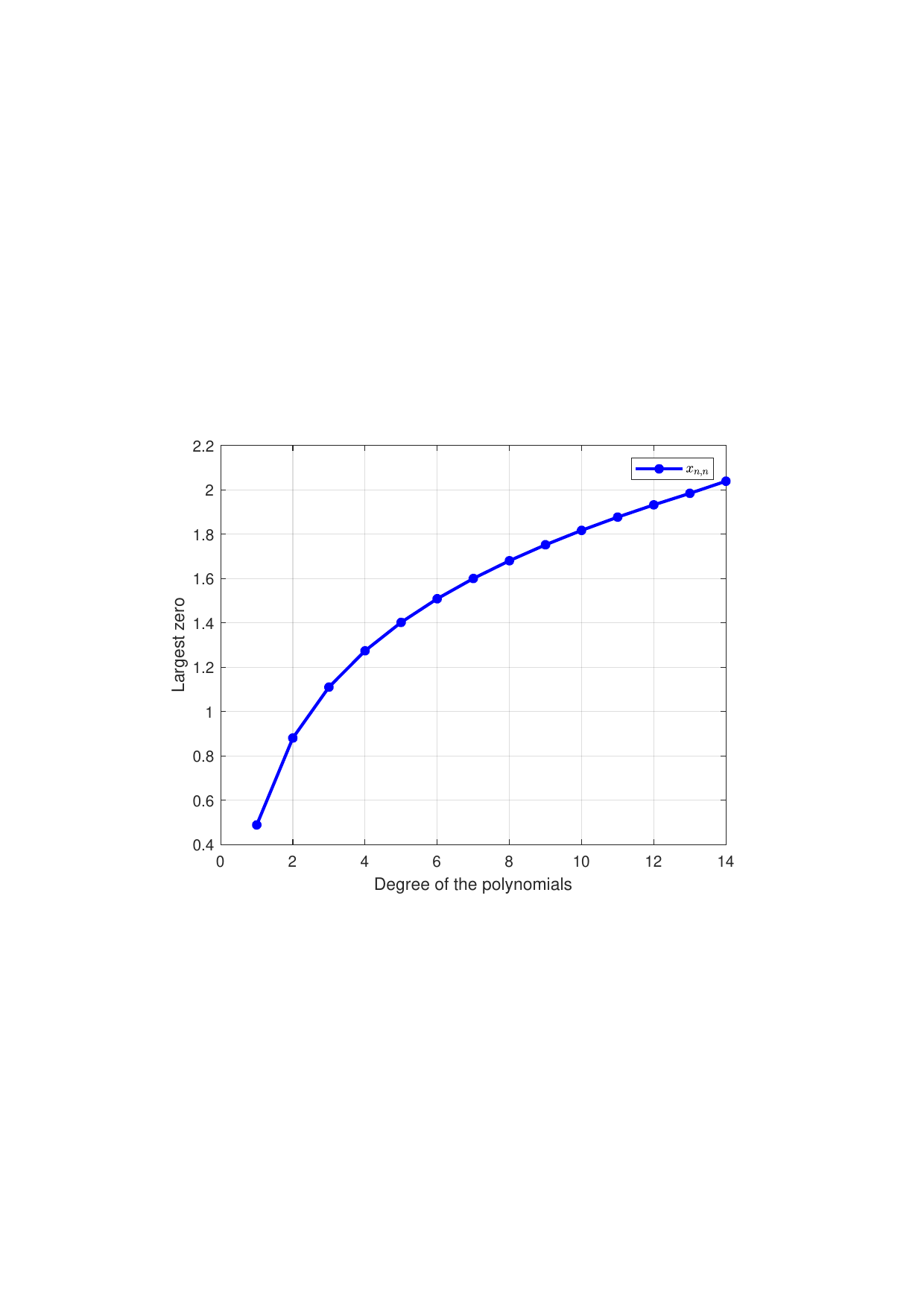}
  \caption{Largest zeros, $n=1,\ldots, 14.$}
\end{subfigure}
\caption{Behavior of smallest and largest zero for each polynomial $P_n(x)$, with $z=1$ and  $n=1,2\ldots, 14.$}
\label{Diagrama1}
\end{figure}
\begin{figure}[ht]
\begin{subfigure}{.5\textwidth}
\includegraphics[width=\textwidth]{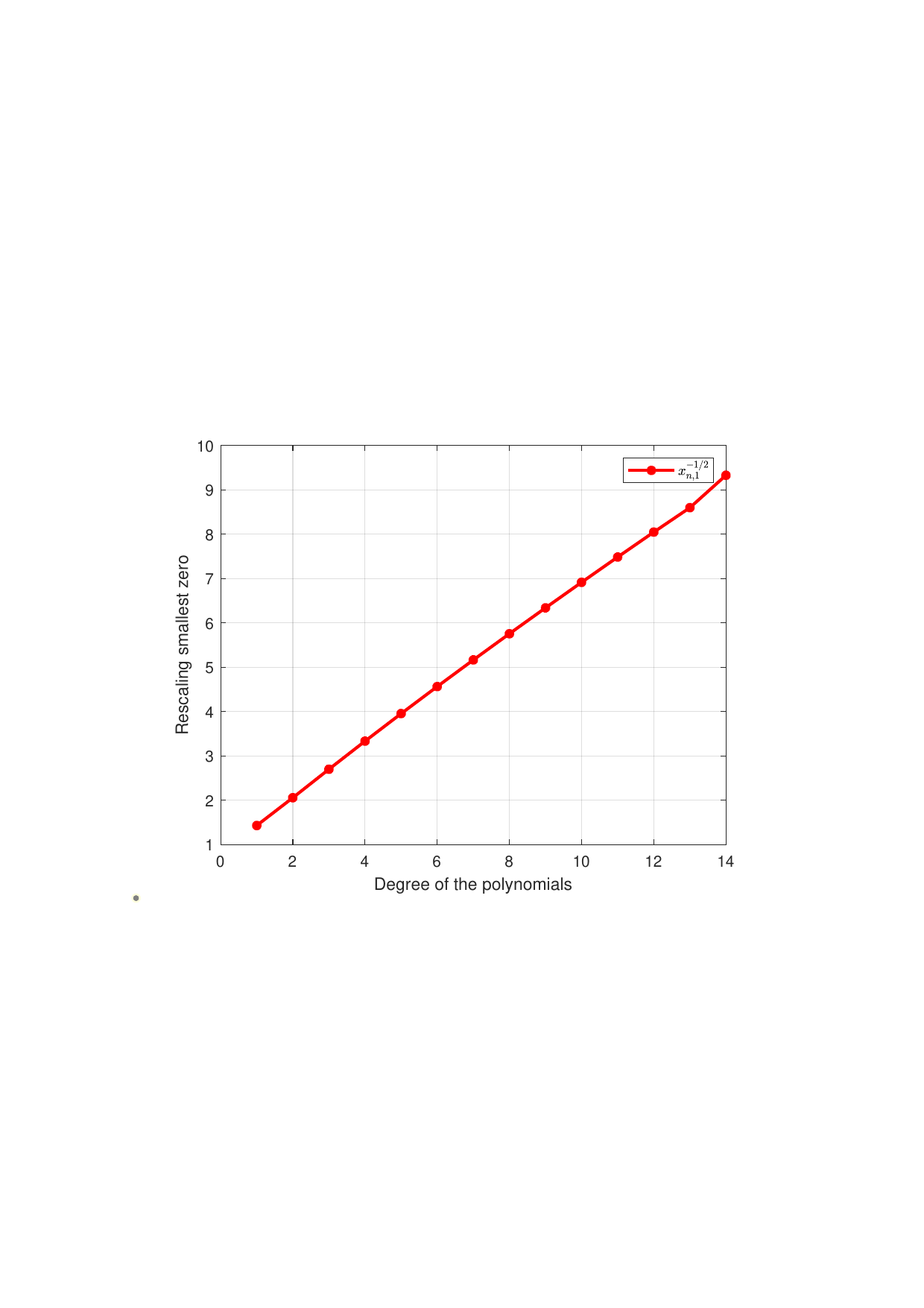}
  \caption{Smallest zeros $x_{n,1}^{-1/2}$, $n=1,\ldots, 14.$}
\end{subfigure}\begin{subfigure}{.5\textwidth}
\includegraphics[width=\textwidth]{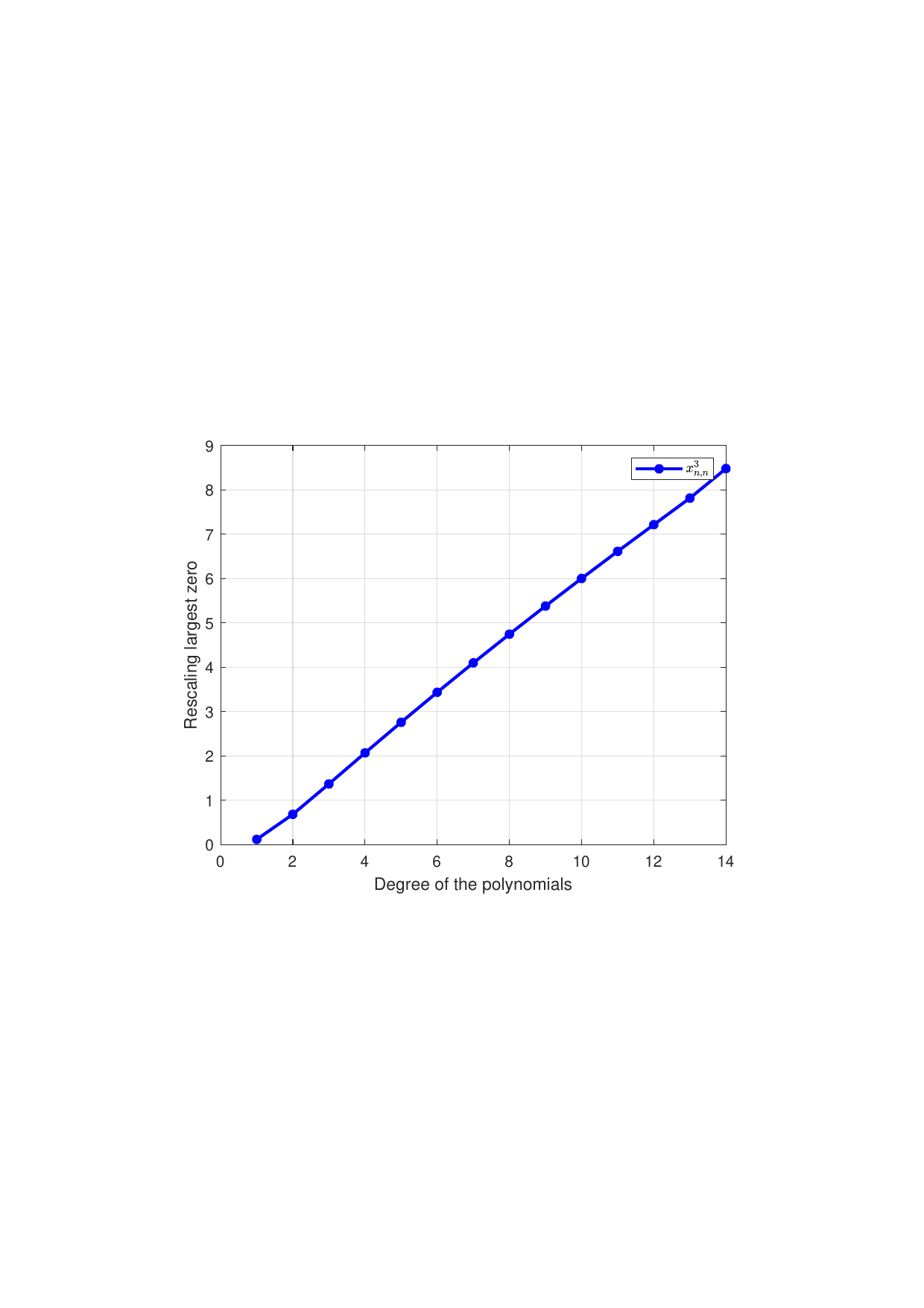}
  \caption{Largest zeros, $x_{n,n}^3,$ $n=1,\ldots, 14.$}
\end{subfigure}
\caption{Linear trend of the modified smallest and largest zero $x_{n,1}^{-1/2}$ and $x^3_{n,n}$, respectively for   $n=1,2,\ldots, 14.$}
\label{Diagrama2}
\end{figure}
Figure~\ref{Diagrama2} indicates that the extreme zeros obey the empirical asymptotics
$$
x_{n,1}\sim\mathcal{O}\!\bigl(n^{-2}\bigr),
\qquad
x_{n,n}\sim  \mathcal{O}\!\bigl(n^{1/3}\bigr).
$$
Let \(\beta := 2(1/140)^{1/4}\) and define the SMOP \((Q_n)_{n\geqslant 0}\) by the TTRR
$$
\begin{cases}
xQ_n(x)=Q_{n+1}(x)+\beta\, Q_n(x)+\dfrac{\beta^2}{4}Q_{n-1}(x), \quad n\geq0, \\[10pt]
Q_0(x)=1, \quad Q_1(x)=x-\beta.
\end{cases}
$$
 If we denote by $(\widehat{U}_n)_{n\geq 0}$ the sequence of monic Chebyshev polynomials of the second kind satisfying the TTRR
$$
\begin{cases}
x\widehat{U}_n(x)=\widehat{U}_{n+1}(x)+\dfrac{1}{4}\widehat{U}_{n-1}(x),\quad n\geq 0,\\[10pt]
\widehat{U}_0(x)=1, \quad \widehat{U}_1(x)=x,
\end{cases}
$$
then we can represent the polynomial $Q_n(x)$ as follows,
$$Q_{n}(x)=\beta^n\,\widehat{U}_n\left(\dfrac{x-\beta}{\beta}\right).$$
Their zeros written in increasing order are
$$y_{n,k}=\beta\left(\cos\left(\frac{(n-k+1)}{n+1}\pi\right)+1\right),\quad k=1,\ldots, n.$$
Clearly, $(y_{n,1})_{n\geqslant 0}$ is a bounded sequence. In fact,
$$\lim_{n\to 0}y_{n,1}=0,$$
and
$$\lim_{n\to\infty}\dfrac{y_{n,n}}{w(n)}=1,\quad \text{
where}\quad w(n)=\beta\dfrac{\pi^2}{2(n+1)^2}.$$
In Figure \ref{Fig:Chebzeros}, we present a comparison of the values of $y_{n,1}$ and $x_{n,1}$ for $n=1,\ldots,14$.
\begin{figure}[ht]
\includegraphics[width=\textwidth]{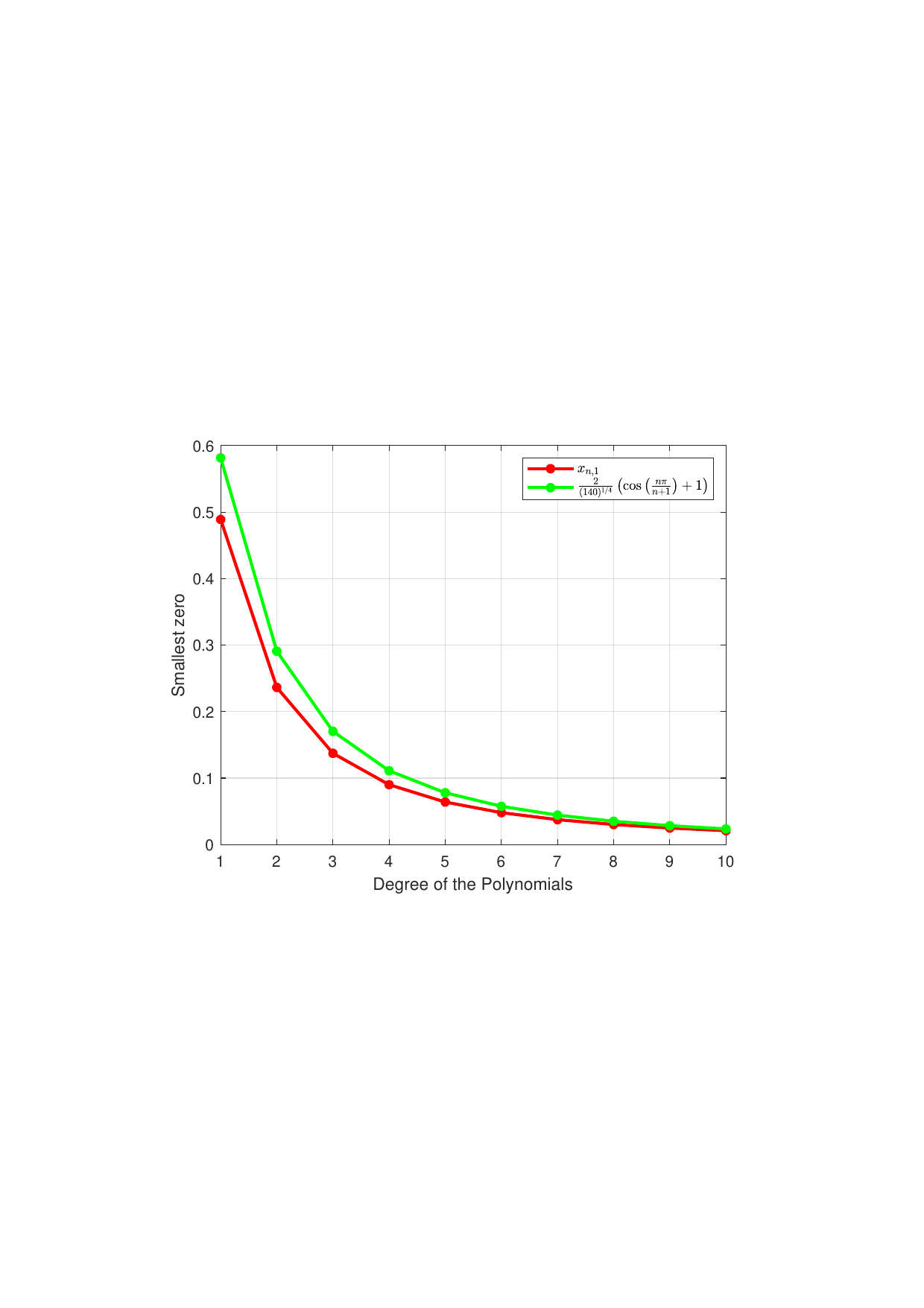}
  \caption{Comparative of $y_{n,1}=\frac{2}{\sqrt[4]{140}}\left(\cos\left(\frac{n\pi}{n+1}\right)+1\right)$ with the smallest zero $x_{n,1}$ for $n=1,\ldots, 14.$}\label{Fig:Chebzeros}
\end{figure}

Now, according to \eqref{comp}, we define the SMOP $(\widetilde{P}_n)_{n\geqslant 0}$  by means of the  TTRR
$$
\begin{cases}
x\widetilde P_n(x)= \widetilde{P}_{n+1}(x)+\sqrt[4]{n}\beta\widetilde P_{n}(x)+\sqrt{n}\dfrac{\beta^2}{4}\widetilde P_{n-1}(x), \quad n\geq0, \\[10pt]
\widetilde{P}_0(x)=1, \quad \widetilde{P}_1(x)=x.
\end{cases}
$$
Denote by $(\widetilde{x}_{n,k})_{k=1}^n$ the zeros of $\widetilde{P}_n(x)$ in increasing order.
Figure \ref{Fig:assymptzeros} shows a comparison of the values of $x_{n,n}$ and $\widetilde{x}_{n,n}$ for $n=1,\ldots,14$.
\begin{figure}[ht]
\includegraphics[width=\textwidth]{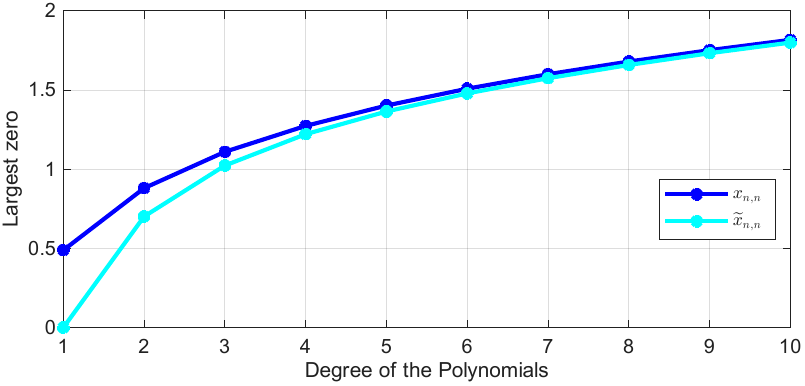}
  \caption{Comparative of the largest zeros $x_{n,n}$ and  $\widetilde x_{n,n}$ for $n=1,\ldots, 14.$}\label{Fig:assymptzeros}
\end{figure}
\section{Concluding remarks.}
In this contribution, we have analyzed some algebraic properties of SMOP associated with the so-called Druyvesteyn distribution function, which is a truncated Freud distribution function supported on the positive real semiaxis. The semiclassical character of such polynomials yields a system of nonlinear difference equations (Laguerre-Freud equations)  that the coefficients of the TTRR satisfy. Raising and lowering operators are deduced and, as a consequence, a second-order linear differential equation for such a SMOP is deduced. An electrostatic interpretation of their zeros is deduced. Finally, some illustrative numerical tests concerning the behavior of the least and greatest zeros of these polynomials are presented.\\

In a work in progress, we are dealing with similar problems for SMOP associated with the generalized Druyvesteyn distribution function $\omega(x)= x^{p} e^{-z x^{4}}, p>0, $ supported on the positive real semi-axis. According to the symmetrization process described in \cite{DGM23}, we point out that the analysis of orthogonal polynomials with respect to the symmetric linear functional associated with the generalized Freud weight function $\omega(t)=| t|^{2p+1} e^{-z t^{8}}$ supported on the real line is a particular case of one analyzed in \cite{Ana}. In particular, the behavior of coefficients on the TTRR as well as the behavior of the largest zero described in our contribution can be deduced in a natural way from the symmetrization process.
\section*{Acknowledgements}
 The work of F. Marcell\'an and M. E. Marriaga  have been supported by the research project PID2021-122154NB-I00 entitled \emph{Ortogonalidad y Aproximaci\'on con Aplicaciones en Machine Learning y Teor\'ia de la Probabilidad} funded  by MICIU/ AEI/ 10.13039/ 501100011033 and by ``ERDF A Way of making Europe''.

\end{document}